\newtheorem{thm}{Theorem}[section]
\theoremstyle{definition}
\newtheorem{definition}[thm]{Definition}
\theoremstyle{remark}
\theoremstyle{plain}
\newtheorem{lem}[thm]{Lemma}
\newtheorem{Corollary}[thm]{Corollary}
\newtheorem{proposition}[thm]{Proposition}
\newcommand{\bq}{/\!\!/}
\newcommand{\diag}{\operatorname{diag}}
\begin{document}
\title{Quasi-positive curvature on Bazaikin spaces}
\author{Jason DeVito and Evan Sherman}
\date{}

\keywords{Bazaikin space, Quasi-positive curvature, biquotient}
\subjclass[2020]{53C30, 53C20}

\maketitle

\begin{abstract}
We completely characterize the sectional curvature of all of the $13$-dimensional Bazaikin spaces.  In particular, we show that all Bazaikin spaces admit a quasi-positively curved Riemannian metric, and that, up to isometry, there is a unique Bazaikin space which is almost positively curved but not positively curved.
\end{abstract}

\section{Introduction}

Given a $5$-tuple of odd integers $\overline{q} = (q_1, ..., q_5)$ with $\gcd(q_1,...,q_5) = 1$, one can define an action of $Sp(2)\times S^1$ on $SU(5)$ as follows.  Given $(A+Bj)\in Sp(2)$ with $A$ and $B$ complex $2\times 2$ matrices, $z\in S^1$, and $C\in SU(5)$, we have $$(A+Bj, z)\ast C = \diag(z^{q_1},...,z^{q_5})\, C\, \begin{bmatrix} A & B & 0\\ -\overline{B} & \overline{A} & 0 \\ 0 & 0 & z^q \end{bmatrix}^{-1},$$ where $q = \sum q_i$.

As is well known, the above action is effectively free if and only if $$\gcd(q_{\sigma(1)} + q_{\sigma(2)}, q_{\sigma(3)} + q_{\sigma(4)}) = 2$$ for every $\sigma\in S_5$.  If $\overline{q} = (q_1,..., q_5)$ verifies this condition, the orbit space, denoted $\mathcal{B}_{\overline{q}}$ is a smooth manifold called a Bazaikin space.

Bazaikin spaces were introduced by Bazaikin \cite{Baz1}, where he showed an infinite family of them admit Riemannian metrics of positive sectional curvature. Specifically, beginning with a bi-invariant Riemannian metric $\langle \cdot ,\cdot \rangle_0$ on $SU(5)$, he Cheeger deformed it in the direction of $U(4)\subseteq SU(5)$ obtaining a left $SU(5)$-invariant, right $U(4)$-invariant metric $\langle \cdot ,\cdot \rangle_1$.  The above action is isometric with respect to $\langle \cdot , \cdot \rangle_1$, so there is an induced metric $\langle \cdot, \cdot \rangle$ on $\mathcal{B}_{\overline{q}}$.  We will refer to this metric as \textit{the natural metric} on $\mathcal{B}_{\overline{q}}$.

With respect to the natural metric, much is already known.  First, since bi-invariant metrics have non-negative sectional curvature, and curvature is non-decreasing along Riemannian submersions \cite{On1}, it follows that every $\mathcal{B}_{\overline{q}}$ has non-negative sectional curvature.  In addition, $\mathcal{B}_{\overline{q}}$ has positive sectional curvature at every point if and only if all sums $q_i + q_j$ for every distinct $i,j$ are positive, or they are all negative \cite{Baz1,DE}.  Apart from the $7$-dimensional Eschenburg spaces \cite{AW,Es2}, these form the only known infinite family of positively curved spaces in a fixed dimension.

In addition, Kerin \cite{Ke1} has shown that if four of the $q_i$ have the same sign, then $\mathcal{B}_{\overline{q}}$ is quasi-positively curved and that $\mathcal{B}_{\overline{q}}$ for $\overline{q} = (1,1,1,1,-1)$ is almost positively curved.  Recall that a Riemannian manifold is called \textit{quasi-positively curved} if it has non-negative sectional curvature and it has a point for which the sectional curvature of all two-planes at that point is positive.  A Riemannian manifold is called \textit{almost positively curved} if the set of points with all two-planes positively curved is open and dense.

Our main result characterizes the sectional curvature of all Bazaikin spaces with respect to the natural metric.

\begin{thm}\label{thm:main}Suppose $\mathcal{B}_{\overline{q}}$ is a Bazaikin space with metric $\langle \cdot ,\cdot \rangle$ as described above.  Then

\begin{itemize}\item  $\mathcal{B}_{\overline{q}}$ is quasi-positively curved if and only if $\overline{q}$ is not a permutation of $\pm(1,1,1,-1,-3)$.

\item  $\mathcal{B}_{\overline{q}}$ is almost positively curved if and only if it is strictly positively curved or $\overline{q}$ is a permutation of $\pm(1,1,1,1,-1)$.

\end{itemize}

\end{thm}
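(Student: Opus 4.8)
The core computational task is to understand the curvature of the natural metric $\langle\cdot,\cdot\rangle$ on $\mathcal{B}_{\overline q}$ pointwise, so the plan is to set up the standard biquotient/submersion machinery and then reduce the problem to a finite combinatorial analysis in $\overline q$. Since $\langle\cdot,\cdot\rangle$ is induced from the Cheeger-deformed metric $\langle\cdot,\cdot\rangle_1$ on $SU(5)$ via a Riemannian submersion $SU(5)\to\mathcal{B}_{\overline q}$, and $\langle\cdot,\cdot\rangle_1$ is in turn obtained from a bi-invariant metric by a Cheeger deformation along $U(4)$, I would first record the precise formula (due to Eschenburg, and used by Kerin, Dearricott, et al.) for when a Cheeger-deformed bi-invariant metric on a Lie group has a zero-curvature plane: a plane $\mathrm{span}\{X,Y\}$ in $\mathfrak{su}(5)$ has zero curvature for $\langle\cdot,\cdot\rangle_1$ essentially when $[X,Y]=0$ together with a vanishing condition on the $\mathfrak{u}(4)$-components (this is where the Cheeger deformation helps but does not fully destroy flat planes). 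Combined with O'Neill's formula and the fact that horizontal zero-curvature planes downstairs lift to zero-curvature planes upstairs, a tangent plane at a point $C\in SU(5)$ projecting to $\overline p\in\mathcal{B}_{\overline q}$ is flat if and only if the corresponding horizontal plane at $C$ is flat for $\langle\cdot,\cdot\rangle_1$ and the associated A-tensor term vanishes. So I would translate "$\mathcal{B}_{\overline q}$ has a zero-curvature plane at $\overline p$" into an explicit system of equations in the entries of $C$ (or rather the Cartan-embedding-type coordinates), parametrized by $\overline q$.

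Next, for the quasi-positivity statement, I would show that the set of "bad" points — points admitting a zero plane — is either all of $\mathcal{B}_{\overline q}$ or a proper (hence, by the structure of the equations, nowhere dense or at least not all) subset, and pin down exactly for which $\overline q$ it is everything. The strategy is: pick a convenient candidate point, typically the image of the identity $e\in SU(5)$ or a diagonal matrix, compute the horizontal space there, and check whether the flat-plane equations have a solution. Using the known positively-curved case (all $q_i+q_j$ of one sign) as one extreme and Kerin's result (four $q_i$ of the same sign $\Rightarrow$ quasi-positive) as a large swath, the remaining cases are those where exactly three $q_i$ share a sign. I would then do a careful case analysis on such $\overline q$ — there are only finitely many "shapes" modulo the gcd/effectively-free constraints — and in each shape either exhibit a point with no zero plane, or show every point has one. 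I expect the single exceptional family $\pm(1,1,1,-1,-3)$ to emerge precisely as the shape where the flat-plane system is solvable at every point; verifying that $(1,1,1,-1,-3)$ is genuinely \emph{not} quasi-positively curved — i.e. proving a zero plane exists at \emph{every} point — is the sharpest part of the argument and will require either an explicit family of flat planes varying over $\mathcal{B}_{\overline q}$ or a clean obstruction (e.g. a totally geodesic flat or a Lie-theoretic reason why a horizontal commuting pair always exists).

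For the almost-positivity statement, I would again split into "strictly positively curved" (handled by \cite{Baz1,DE}, nothing to do) and the remaining non-positively-curved but quasi-positively curved cases. Here I must decide, for each such $\overline q$, whether the set $Z\subseteq\mathcal{B}_{\overline q}$ of points carrying a zero plane is nowhere dense (almost positive) or has nonempty interior. Because $Z$ is cut out by the vanishing of finitely many real-analytic functions on $\mathcal{B}_{\overline q}$, it is either all of $\mathcal{B}_{\overline q}$ (excluded, since the space is quasi-positive) or has measure zero; the real content is checking that $Z\ne\mathcal{B}_{\overline q}$ reduces to almost-positivity — which it does by analyticity — so the task becomes: for which $\overline q$ does $Z=\varnothing$ fail while $Z$ is still proper, and among those, which have $Z$ actually an open condition rather than merely nonempty? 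I anticipate showing that for $\overline q$ a permutation of $\pm(1,1,1,1,-1)$ one recovers Kerin's almost-positivity directly, and that for every other non-positively-curved $\overline q$ (except the excluded $\pm(1,1,1,-1,-3)$) the zero set is in fact \emph{not} dense either — wait, that would make them almost positive too, contradicting the claim. So the real statement must be that for all other non-positively-curved $\overline q$, the zero set $Z$ has nonempty interior, i.e. there is an \emph{open} set of points each carrying a flat plane; establishing this — finding an open family of flat horizontal planes, as opposed to an isolated one — is the main obstacle on this side, and I would approach it by differentiating the flat-plane equations and showing the solution variety surjects onto an open subset of $\mathcal{B}_{\overline q}$ via a submersion/rank argument, with the $\pm(1,1,1,1,-1)$ and $\pm(1,1,1,-1,-3)$ cases being exactly the degenerate loci of that rank computation.
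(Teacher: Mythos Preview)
Your overall framework is right, and reducing to the zero-curvature criterion (the two equations in Proposition~\ref{prop:curvcond}) is exactly how the paper begins; checking positivity at the identity is also what the paper does for Theorem~\ref{thm:qp}. But there are two substantive gaps.

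First, your analyticity argument for the bad set $Z$ is wrong, and although you backtrack, the confusion matters. The condition defining $Z$ is \emph{not} the vanishing of finitely many real-analytic functions on $\mathcal{B}_{\overline q}$: it is the \emph{existence} of some $h\in Sp(2)$ solving Equation~\eqref{eq2}, so $Z$ is a projection to $SU(5)$ of an analytic subset of $SU(5)\times Sp(2)$. Projections of analytic varieties routinely have nonempty interior without being the whole space. This is precisely why most Bazaikin spaces turn out to be quasi-positive yet not almost positive: $Z$ is proper but contains an open set. Analyticity buys you nothing here, and there is no shortcut to constructing that open set by hand.

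Second, the paper's key technical device, which you do not mention, is an intermediate value argument on the connected group $Sp(2)$. A zero plane exists at $[A]$ exactly when the continuous function
\[
f_A(h)=\sum_{\ell=1}^5\bigl(|(Ah)_{\ell 2}|^2+|(Ah)_{\ell 4}|^2\bigr)q_\ell
\]
vanishes for some $h\in Sp(2)$; since $Sp(2)$ is connected, it suffices to exhibit two values of $h$ where $f_A$ has opposite signs. The paper uses this twice. In Proposition~\ref{prop:exzero} it shows that for $\overline q=(1,1,1,-1,-3)$ and \emph{every} $A$, one can choose $h_2$ (the second column of $h$) orthogonal to a suitable row of $A$ to get $f_A(h)\geq 0$, and parallel to another row to get $f_A(h)\leq 0$; this is what proves the exceptional space has flats everywhere, replacing your hoped-for ``explicit family'' or ``clean obstruction.'' In Theorem~\ref{thm:notalmost} it writes down an explicit open neighborhood $V$ of a concrete matrix $A_0\in SU(5)$, cut out by elementary inequalities on the entries, chosen so that for every $B\in V$ one has $f_B(I)>0$ while $f_B(h_0)<0$ for a specific $h_0$ (with $h_2=\overline{s}_5/|s_5|$). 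Your proposed submersion/rank computation on the solution variety is a conceivable alternative, but it is much harder to make rigorous and does not naturally single out the two exceptional tuples; the IVT-on-$Sp(2)$ trick, combined with the transitivity of $Sp(2)$ on $S^7\subseteq\mathbb{C}^4$ (Proposition~\ref{prop:emb}) to pick $h_2$ at will, is the idea that actually drives both halves of the argument.
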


As shown in \cite{EKS}, the diffeomorphism type of $\mathcal{B}_{\overline{q}}$ is unchanged by replacing any one $q_i$ with $-\sum_{\ell = 1}^5 q_\ell$.  Thus, we observe that the unique Bazaikin space whose natural metric has zero-curvature planes everywhere is diffeomorphic to the unique Bazaikin space whose natural metric is almost positively curved but not positively curved.

\begin{Corollary}  Up to diffeomorphism, every Bazaikin space admits a metric of quasi-positive curvature.
\end{Corollary}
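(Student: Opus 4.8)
The plan is to obtain the Corollary directly from Theorem~\ref{thm:main} together with the diffeomorphism invariance from \cite{EKS} recalled just above. By the first bullet of Theorem~\ref{thm:main}, the natural metric on $\mathcal{B}_{\overline{q}}$ is already quasi-positively curved for every admissible $\overline{q}$ except those that are, up to permutation, equal to $\pm(1,1,1,-1,-3)$. So the only work is to exhibit \emph{some} quasi-positively curved metric on the Bazaikin spaces $\mathcal{B}_{(1,1,1,-1,-3)}$ and $\mathcal{B}_{(-1,-1,-1,1,3)}$. Replacing every $q_i$ by $-q_i$ precomposes the $S^1$-factor of the defining action with the automorphism $z\mapsto z^{-1}$ and hence leaves the orbit space, and the descended metric, unchanged; so it is enough to handle $\overline{q}=(1,1,1,-1,-3)$. (One first checks, via the pairwise-sum gcd condition, that this tuple really does define a Bazaikin space --- a short computation over the three ways of pairing up four of the five entries.)

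For this $\overline{q}$ one computes $\sum_{\ell=1}^{5} q_\ell = -1$. Applying the move of \cite{EKS} to the entry $q_5=-3$ --- that is, replacing it by $-\sum_\ell q_\ell = 1$ --- produces the $5$-tuple $(1,1,1,-1,1)$, a permutation of $(1,1,1,1,-1)$. Therefore $\mathcal{B}_{(1,1,1,-1,-3)}$ is diffeomorphic to $\mathcal{B}_{(1,1,1,1,-1)}$. By the second bullet of Theorem~\ref{thm:main} the latter is almost positively curved, in particular quasi-positively curved, with respect to its natural metric; pulling that metric back through the diffeomorphism gives a quasi-positively curved metric on $\mathcal{B}_{(1,1,1,-1,-3)}$, and the Corollary follows.

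I do not expect any genuine obstacle here: all the substance is already contained in Theorem~\ref{thm:main} and the cited diffeomorphism result, and the only point requiring care is verifying that the single exceptional diffeomorphism type is carried by the \cite{EKS} operation onto a tuple that Theorem~\ref{thm:main} certifies as (at least) quasi-positively curved --- which the choice of swapping $-3$ for $1$ arranges.
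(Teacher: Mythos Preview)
Your argument is correct and matches the paper's own reasoning essentially line for line: the paper notes, in the paragraph immediately preceding the Corollary, that the \cite{EKS} move (replace one $q_i$ by $-\sum_\ell q_\ell$) carries $(1,1,1,-1,-3)$ to a permutation of $(1,1,1,1,-1)$, which is (almost, hence quasi-) positively curved by Theorem~\ref{thm:main}. The only cosmetic difference is that you appeal to the second bullet of Theorem~\ref{thm:main}, whereas the first bullet would already suffice since $(1,1,1,1,-1)$ is not a permutation of $\pm(1,1,1,-1,-3)$.
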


The outline of this paper is as follows.  Section \ref{sec:qp} contains more detailed background on Bazaikin spaces, their geometry, and their topology.  Section \ref{sec:open} begins with a proof that, apart from $\overline{q} = (1,1,1,-1,-3)$, every Bazaikin space is quasi-positively curved.  The remainder of Section \ref{sec:open} is devoted to the construction of an open subset of $\mathcal{B}_{\overline{q}}$ with zero-curvature planes under the assumption that the quantities $q_i + q_j$ can be both positive and negative.  Having accomplished this, the proof of Theorem \ref{thm:main} is an easy consequence.

\textbf{Acknowledgements}:  The first author was supported by NSF Grant DMS-2105556.  He is grateful for their support.  Both authors would like to thank Martin Kerin for helpful conversations.

\section{Background}\label{sec:qp}

\subsection{Cheeger deformations}  We begin with the background information on Cheeger deformations of biquotients.  Biquotients are defined as follows:  given a compact Lie group $G$, any closed subgroup $H\subseteq G\times G$ naturally acts on $G$ via $(h_1,h_2)\ast g = h_1 g h_2^{-1}$.  When the action is effectively free, the orbit space is denoted by $G\bq H$ and is called a biquotient.  For more detailed background on biquotients, see \cite{Es2}.

Cheeger deformations, introduced by Cheeger \cite{Ch1}, are metric deformations which tend to increase curvature at the expense of symmetry.

In more detail, suppose $G$ is a compact Lie group and $K\subseteq G$ is a closed subgroup.  If $\langle\cdot,\cdot\rangle_0$ is a fixed bi-invariant metric on $G$, then one can consider the Riemannian metric $\langle \cdot, \cdot\rangle_0 + t \langle \cdot,\cdot\rangle_0|_K$ on $G\times K$ where $t\in (0,1)$ is some fixed parameter.  Then $K$ acts isometrically on $G\times K$ via $k\ast(g,k') = (gk^{-1}, kk')$ and hence the quotient $(G\times K)/K$ inherits a Riemannian metric.  The $G\times K$ action on $(G\times K)/K$ given by $(g,k)\ast [(g',k')] = [(gg', k' k^{-1})]$ is isometric.  The map $(g,k')\mapsto gk'\in G$ descends to a diffeomorphism $(G\times K)/K\rightarrow G$ and under this diffeomorphism the induced $G\times K$ action on $G$ is simply by left and right multiplication.  Hence, by declaring this diffeomorphism to be an isometry, we obtain a metric $\langle\cdot,\cdot\rangle_1$ on $G$ which is left $G$-invariant and right $K$-invariant, the so-called \textit{Cheeger deformation} of $\langle \cdot, \cdot\rangle_0$ in the direction of $K$.  From O'Neill's formula for a Riemannian submersion \cite{On1} together with the well-known fact that $\langle \cdot, \cdot \rangle_0$ is non-negatively curved, it follows that $\langle \cdot, \cdot \rangle_1$ is also non-negatively curved.

If $H\subseteq G\times K$, then $H$ acts on $(G,\langle\cdot,\cdot\rangle_1)$ isometrically via $(h_1,h_2)\ast g = h_1 g h_2^{-1}$.  When this action is effectively free, the biquotient $G\bq H$, inherits a Riemannian metric $\langle \cdot, \cdot\rangle$ from $\langle \cdot, \cdot \rangle_1$.  Again, via O'Neill's formula \cite{On1}, we see that $G\bq H$ is non-negatively curved.  Further, any zero-curvature plane $\sigma \subseteq T_{[g]} G\bq H$ must lift to horizontal zero-curvature plane in $(G\times K, \langle \cdot, \cdot \rangle_0 + t\langle \cdot, \cdot \rangle_0)$, and zero-curvature planes in this latter space are well understood.  We also note that while, in principle, O'Neill's formula allows a horizontal zero-curvature plane in $G\times K$ to project to a positive curvature plane in $G\bq H$, Wilking and Tapp \cite{Wi,Ta2} have shown that in the special case considered above, this does not occur. 

\

For the duration of the paper, we set $$G= SU(5), K = U(4), H = Sp(2)\times S^1$$ where $K$ is embedded via $$k\mapsto \diag(k,\overline{\det k}),$$ and where $H$ is embedded into $G\times K\subseteq G\times G$ via $$(A+Bj, z)\mapsto \left(\diag(z^{q_1},..., z^{q_5}), \begin{bmatrix} A & B & 0\\ -\overline{B} & \overline{A} & 0 \\ 0 &0 & z^q\end{bmatrix}\right).$$  Here, each $q_i\in \mathbb{Z}$, $\gcd(q_1,...,q_k) = 1$, and $q = \sum q_i$.

Focusing on the embedding $Sp(2)\subseteq SU(5)$ given by $(A+Bj)\mapsto \begin{bmatrix} A& B & 0 \\ -\overline{B} & \overline{A} & 0 \\ 0 & 0 & 1\end{bmatrix}$ above, we observe the following two facts:

\begin{proposition}\label{prop:emb}

For any $h = (h_{ij})\in Sp(2)\subseteq SU(5)$ with $1\leq i,j\leq 4$, we have $|h_{i2}| = |h_{j 4}|$ whenever $|i-j| = 2$.  In addition, given any vector $v\in S^7\subseteq \mathbb{C}^4$, there is an element $h\in Sp(2)\subseteq SU(5)$ for which $v$ comprises the first four entries of the second column of $h$.

\end{proposition}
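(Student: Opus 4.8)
The plan is to make everything explicit from the block form of the embedding and then read both assertions off by inspection. Write $h = A+Bj \in Sp(2)$ with $A = (a_{kl})$ and $B = (b_{kl})$ complex $2\times 2$ matrices. Under $Sp(2)\hookrightarrow SU(5)$ the image of $h$ is the block matrix with $(1,1)$--$(2,2)$ block $A$, with $(1,3)$--$(2,4)$ block $B$, with $(3,1)$--$(4,2)$ block $-\overline{B}$, with $(3,3)$--$(4,4)$ block $\overline{A}$, and with $(5,5)$ entry $1$, all other entries of the last row and column being zero. In particular the second and fourth columns of $h$, in rows $1$ through $4$, read
$$
(h_{12},h_{22},h_{32},h_{42}) = \bigl(a_{12},\, a_{22},\, -\overline{b_{12}},\, -\overline{b_{22}}\bigr), \qquad (h_{14},h_{24},h_{34},h_{44}) = \bigl(b_{12},\, b_{22},\, \overline{a_{12}},\, \overline{a_{22}}\bigr).
$$

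For the first assertion, the pairs $(i,j)$ with $1\le i,j\le 4$ and $|i-j|=2$ are exactly $(1,3),(3,1),(2,4),(4,2)$, and the formulas above show that in each case $h_{i2}$ and $h_{j4}$ coincide up to sign and complex conjugation: $|h_{12}| = |a_{12}| = |h_{34}|$, $|h_{32}| = |b_{12}| = |h_{14}|$, $|h_{22}| = |a_{22}| = |h_{44}|$, and $|h_{42}| = |b_{22}| = |h_{24}|$. This is the whole argument for the first claim; note that the restriction $i,j\le 4$ is genuinely needed, since the pattern breaks as soon as the trivial fifth row or column enters.

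For the second assertion, the first displayed formula says precisely that the first four entries of the second column of $h=A+Bj$ are $(a_{12},a_{22},-\overline{b_{12}},-\overline{b_{22}})$, so, given $v\in S^7\subseteq\mathbb{C}^4$, it suffices to find an element of $Sp(2)$ whose second column, under the standard representation $A+Bj\mapsto\begin{bmatrix}A&B\\-\overline{B}&\overline{A}\end{bmatrix}\in U(4)$, equals $v$; appending the trivial fifth row and column then gives the desired element of $SU(5)$. Since this representation identifies $Sp(2)$ with the group of all unitary matrices of that block form, and $Sp(2)$ acts transitively on $S^7\subseteq\mathbb{C}^4\cong\mathbb{H}^2$, one may simply take $g\in Sp(2)$ with $ge_2 = v$. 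Alternatively, and more explicitly: the anti-linear isometry $J\colon(z_1,z_2,z_3,z_4)\mapsto(-\overline{z_3},-\overline{z_4},\overline{z_1},\overline{z_2})$ of $\mathbb{C}^4$ satisfies $J^2=-\mathrm{id}$ and $\langle x,Jx\rangle = 0$, so $\{v,Jv\}$ is orthonormal; choosing a unit vector $w$ in the ($J$-invariant, $2$-dimensional) orthogonal complement of $\mathrm{span}_{\mathbb{C}}\{v,Jv\}$ gives an orthonormal basis $\{w,v,Jw,Jv\}$ of $\mathbb{C}^4$, and the unitary matrix having these vectors as columns in this order automatically has the required block form — its third and fourth columns being $J$ of its first and second — hence lies in $Sp(2)$ and has second column $v$.

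I expect no essential obstacle here: the proposition is bookkeeping about the block embedding. The only points deserving a line of verification are that the matrix built in the explicit construction genuinely has the block symmetry (equivalently, that columns $1,3$ and $2,4$ are $J$-related, which is immediate from the definition of $J$) and — if one prefers the transitivity route — the standard identification of $Sp(2)$ with the block-form subgroup of $U(4)$ together with the classical transitivity of $Sp(2)$ on $S^7$.
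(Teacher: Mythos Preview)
Your proof is correct and follows essentially the same route as the paper: read the first claim off the block form (the paper phrases this as ``the entries of $A$ and $\overline{A}$ have the same length, as do the entries of $B$ and $-\overline{B}$''), and deduce the second from the transitivity of $Sp(2)$ on $S^7\subseteq\mathbb{H}^2\cong\mathbb{C}^4$. Your explicit $J$-construction is a nice supplementary argument not present in the paper, but it is not needed for the proof.
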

\begin{proof}The first statement is obvious because the entries of $A$ and $\overline{A}$ have the same length, as do the entries of $B$ and $-\overline{B}$.  The second statement is an immediate consequence of the fact that the standard representation of $Sp(2)$ on $\mathbb{H}^2\cong \mathbb{C}^4$ acts transitively on the unit sphere.

\end{proof}

We use the bi-invariant metric whose value at identity $I\in G$ is $\langle X,Y\rangle_0 = -ReTr(XY)$ with $X,Y\in T_{I} G$.  We recall that the induced metric on $\mathcal{B}_{\overline{q}}$ is referred to as \textit{the natural metric}.

We note that, strictly speaking, $H$ is not a subgroup of $SU(5)\times SU(5)$, but rather, is a subgroup of $U(5)\times N_{U(5)}(K)$ where $N_{U(5)}(K)$ denotes the normalizer of $K$ in $U(5)$.  However, the biquotient action of $H$ on $U(5)$ preserves $SU(5)$ and acts isometrically.  Hence, all of the above still applies in this slightly more general case.

As is well known, this action is free iff all $q_i$ are odd and $$\gcd(q_{\sigma(1)} + q_{\sigma(2)}, q_{\sigma(3)} + q_{\sigma(4)}) = 2$$ for all permutations $\sigma\in S_5$.

It is clear that replacing $\overline{q}$ with $-\overline{q}$ determines an  equivalent action.  We therefore can and will always assume at least three $q_i$ are positive, motivating the following definition.

\begin{definition}An element $\overline{q} = (q_1,...,q_5)\in \mathbb{Z}^5$ is called  \textit{admissible} if \begin{itemize} \item All $q_i$ are odd,

\item  $\gcd(q_{\sigma(1)} + q_{\sigma(2)}, q_{\sigma(3)} + q_{\sigma(4)}) = 2$ for all $\sigma \in S_5$, and 

\item  at least three $q_i$ are positive.

\end{itemize}

\end{definition}

Given an admissible $\overline{q}$, it is well known that one can modify the $q_i$ to obtain diffeomorphic biquotients.  We summarize these allowable modifications in the following proposition.

\begin{proposition}\label{prop:isom}  Suppose $\overline{q}$ is admissible.

\begin{itemize} \item  The biquotients $\mathcal{B}_{\overline{q}}$ and $\mathcal{B}_{-\overline{q}}$ are isometric when equipped with their natural metrics.

\item  If $\overline{r}$ is any permutation of $\overline{q}$, then the biquotients $\mathcal{B}_{\overline{q}}$ and $\mathcal{B}_{\overline{r}}$ are isometric when equipped with their natural metrics.

\item \cite{EKS}  If $\overline{r}$ is obtained from $\overline{q}$ by replacing one $q_i$ with $-\sum_{\ell=1}^5 q_\ell$, then $\mathcal{B}_{\overline{q}}$ and $\mathcal{B}_{\overline{r}}$ are diffeomorphic, but, in general, are non-isometric when equipped with their natural metrics.

\end{itemize}

\end{proposition}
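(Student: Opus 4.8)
The plan is to prove Proposition~\ref{prop:isom} one bullet at a time, with the third bullet cited from \cite{EKS} and hence needing no argument here. For the first bullet, I would exhibit an explicit isometry between $\mathcal{B}_{\overline{q}}$ and $\mathcal{B}_{-\overline{q}}$. The natural map to try is the one induced by complex conjugation on $SU(5)$: the map $C\mapsto \overline{C}$ is an isometry of $(SU(5),\langle\cdot,\cdot\rangle_0)$ since $\langle X,Y\rangle_0 = -\operatorname{Re}\operatorname{Tr}(XY)$ is preserved by $X\mapsto \overline{X}$, and conjugation commutes with the Cheeger deformation in the direction of $U(4)$ because $U(4)$ is stable under conjugation; hence it is also an isometry of $(SU(5),\langle\cdot,\cdot\rangle_1)$. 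One then checks that conjugation intertwines the $H$-action for $\overline{q}$ with the $H$-action for $-\overline{q}$: indeed $\overline{\operatorname{diag}(z^{q_1},\dots,z^{q_5})} = \operatorname{diag}(\overline{z}^{q_1},\dots,\overline{z}^{q_5})$, so the $S^1$-factor changes $z$ to $\overline{z} = z^{-1}$, which reverses the sign of every $q_i$, and similarly $\overline{A+Bj} = \overline{A} + \overline{B}j$ gives another element of $Sp(2)$, so the $Sp(2)$-factor is sent into $Sp(2)$. Since conjugation is $H$-equivariant in this sense, it descends to an isometry of the quotients.

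For the second bullet, I would use permutation matrices. Given a permutation $\sigma\in S_5$, let $P_\sigma\in U(5)$ be the associated permutation matrix (choosing signs, if necessary, to land in $SU(5)$, or simply working in $U(5)$ as the paper permits). Conjugation $C\mapsto P_\sigma C P_\sigma^{-1}$ is an isometry of $(SU(5),\langle\cdot,\cdot\rangle_0)$. The subtlety is that for a \emph{general} $\sigma$ this conjugation does not preserve $U(4)\subseteq SU(5)$, and so need not commute with the Cheeger deformation. To handle this cleanly I would restrict to transpositions: it suffices to prove the statement for transpositions $(i\;i{+}1)$ since these generate $S_5$, and for transpositions among the first four coordinates the permutation matrix lies in $U(4)$ (in fact in $Sp(2)$ after the embedding), so conjugation preserves $U(4)$ and commutes with the Cheeger deformation. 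For the transposition $(4\;5)$ one argues separately, or one observes that the block structure in the definition of the $H$-embedding already puts the last coordinate on a special footing; alternatively, one notes that permuting the $q_i$ corresponds exactly to pre- and post-composing the biquotient action with conjugation by $P_\sigma$, and the induced metric on the quotient only depends on the orbit equivalence class. In all cases the map on $SU(5)$ is an isometry carrying $H$-orbits for $\overline{q}$ to $H$-orbits for $\overline{r}$, hence descends to an isometry of quotients.

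The step I expect to be the main obstacle is checking that the relevant conjugations genuinely commute with the Cheeger deformation rather than merely with the bi-invariant metric $\langle\cdot,\cdot\rangle_0$. The Cheeger deformation $\langle\cdot,\cdot\rangle_1$ is built from the $U(4)$-splitting of $\mathfrak{su}(5) = \mathfrak{u}(4)\oplus\mathfrak{u}(4)^\perp$, and an isometry of $\langle\cdot,\cdot\rangle_0$ respects $\langle\cdot,\cdot\rangle_1$ only if it preserves this splitting, i.e. normalizes $U(4)$. Complex conjugation and conjugation by a permutation matrix in $U(4)$ both do normalize $U(4)$, so these cases are fine; the only genuine care is needed for the coordinate swap involving the fifth slot, which I would dispatch by combining it with bullet three's philosophy (the $q_i$ enter only through the subgroup $H$, and permuting them is realized by conjugating the embedding of $H$) or by a direct check that the resulting quotient metrics agree. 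Once these equivariance-of-Cheeger-deformation points are nailed down, the descent to the quotient and the conclusion that the descended maps are isometries is routine, since a Riemannian submersion is determined by the upstairs metric and the group action.
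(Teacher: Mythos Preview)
The paper does not supply a proof of this proposition; it is stated as a standard fact (with the third bullet attributed to \cite{EKS}), so there is no argument in the paper to compare against.  That said, let me comment on your proposal on its own merits.

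Your treatment of the first bullet is correct: complex conjugation is a Lie group automorphism of $SU(5)$ which is an isometry for $\langle\cdot,\cdot\rangle_0$, preserves the block $U(4)$ and hence the $\mathfrak{u}(4)\oplus\mathfrak{u}(4)^\perp$ splitting, so it is an isometry for $\langle\cdot,\cdot\rangle_1$; and one checks it carries the $H$-action for $\overline q$ to the $H$-action for $-\overline q$.

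Your treatment of the second bullet, however, has a genuine gap.  Conjugation $C\mapsto P_\sigma C P_\sigma^{-1}$ does \emph{not} intertwine the two $H$-actions: it sends the right factor $M\in Sp(2)\cdot z^q$ to $P_\sigma M P_\sigma^{-1}$, and a general (even $4\times 4$) permutation matrix does not normalize $Sp(2)\subseteq SU(4)$, so the image is not an $H$-orbit for $\overline r$.  Your parenthetical that such $P_\sigma$ lies ``in fact in $Sp(2)$'' is false already for the transposition $(1\;3)$.  The $(4\;5)$ case you leave essentially unaddressed.

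The clean fix is to use \emph{left} multiplication only.  Observe that the $q_i$ enter the $H$-embedding solely through the \emph{left} factor $\operatorname{diag}(z^{q_1},\dots,z^{q_5})$; the right factor depends only on $q=\sum q_i$, which is permutation-invariant.  If $P_\sigma\in SU(5)$ is a signed permutation matrix with $P_\sigma\,\operatorname{diag}(z^{q_i})\,P_\sigma^{-1}=\operatorname{diag}(z^{q_{\sigma(i)}})$, then $\phi(C)=P_\sigma C$ satisfies
\[
\phi\bigl(\operatorname{diag}(z^{q_i})\,C\,M^{-1}\bigr)=\operatorname{diag}(z^{q_{\sigma(i)}})\,\phi(C)\,M^{-1},
\]
so $\phi$ sends $H_{\overline q}$-orbits to $H_{\overline r}$-orbits with the \emph{same} right factor.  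Since $\langle\cdot,\cdot\rangle_1$ is left $SU(5)$-invariant, $\phi$ is automatically an isometry, and no discussion of normalizing $U(4)$ or of the Cheeger splitting is needed at all.  This handles every $\sigma\in S_5$ uniformly.
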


As mentioned previously, Bazaikin spaces have been extensively studied and, as such, conditions controlling curvature are well understood.  For example, based on work by Dearicott and Eschenburg \cite{DE}, Kerin \cite[Lemma 3.1, Lemma 3.2]{Ke1} proves the following.

\begin{proposition}\label{prop:curvcond}  Suppose $\overline{q} = (q_1,...,q_5)$ is admissible.  For $A =(A_{ij})\in SU(5)$, there is a zero-curvature plane at the point $[A]\in \mathcal{B}_{\overline{q}}$ if and only if at least one of the following two conditions is satisfied:

\begin{align}
     \sum_{\ell=1}^5 q_{\ell} &= \sum_{\ell=1}^{5}|A_{\ell5}|^2 q_\ell\label{eq1}\\
    0 &= \sum_{\ell=1}^{5}(|(Ah)_{\ell 2}|^2+|(Ah)_{\ell4}|^2)q_\ell\label{eq2}
\end{align}
for some $h \in Sp(2) \subseteq SU(5).$
\end{proposition}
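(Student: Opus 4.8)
The plan is to reduce the question of whether a zero-curvature plane exists at $[A]$ to the combinatorial/geometric conditions \eqref{eq1} and \eqref{eq2} by unwinding the O'Neill machinery described in Section~\ref{sec:qp}. A zero-curvature plane $\sigma\subseteq T_{[A]}\mathcal{B}_{\overline q}$ lifts to a horizontal zero-curvature plane in $(SU(5),\langle\cdot,\cdot\rangle_1)$ that is additionally orthogonal to the $H$-orbit directions, and by the standard description of the Cheeger-deformed metric this in turn lifts to a horizontal zero-curvature plane in $(SU(5)\times U(4),\langle\cdot,\cdot\rangle_0+t\langle\cdot,\cdot\rangle_0)$. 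In a product of bi-invariant metrics the zero-curvature planes are completely understood: a plane $\mathrm{span}\{(X_1,X_2),(Y_1,Y_2)\}$ at a point is flat iff $[X_1,Y_1]=0$ and $[X_2,Y_2]=0$ (commuting pairs in each factor). Translating everything back to the left-invariant framing on $SU(5)$, one is led to look for a pair of left-invariant vector fields whose values at $A$ commute after the appropriate projections, and which are simultaneously orthogonal to the vectors tangent to the $H=Sp(2)\times S^1$ action at $A$. Since $\langle X,Y\rangle_0=-\operatorname{Re}\operatorname{Tr}(XY)$, the orthogonality conditions against the $S^1$-direction $\operatorname{diag}(iq_1,\dots,iq_5)$ and against the $Sp(2)$-directions become linear conditions whose explicit form, after a change of variables by right-translation $A\mapsto Ah$ with $h\in Sp(2)$, are exactly the two displayed equations.

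Concretely, I would proceed as follows. First, recall (or re-derive, citing \cite{DE} and \cite[Lemma 3.1, Lemma 3.2]{Ke1}) the precise description of horizontal zero-curvature planes for a Cheeger deformation of a bi-invariant metric followed by a biquotient quotient: such a plane is spanned by lifts that, in the relevant Lie algebra, are forced into a maximal abelian subalgebra, and this rigidity produces a finite list of ``candidate'' flat planes governed by roots/weights. Second, identify the weights of the isotropy representation: the $S^1$-factor acts by $z\mapsto\operatorname{diag}(z^{q_1},\dots,z^{q_5})$ on the left and $z^q$ on the last coordinate of $U(4)$, so the relevant weight data are precisely the integers $q_\ell$ and the sums $q_i+q_j$ coming from the off-diagonal slots of $\mathfrak{su}(5)$. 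Third, write the orthogonality-to-the-orbit conditions at the point $A$ (resp.\ $Ah$) in terms of $\langle\cdot,\cdot\rangle_0=-\operatorname{Re}\operatorname{Tr}$: the condition that a horizontal zero-curvature plane survives along the $S^1$-direction yields a weighted sum of $|A_{\ell5}|^2$ against $q_\ell$ being balanced, i.e.\ \eqref{eq1}; the condition along the remaining $Sp(2)\times S^1$ directions, after using Proposition~\ref{prop:emb} to normalize by some $h\in Sp(2)$, yields \eqref{eq2}. Fourth, check both implications: given \eqref{eq1} or \eqref{eq2}, exhibit an explicit flat plane (built from appropriate root vectors $E_{\ell 5}$ or from the $Sp(2)$-complementary directions twisted by $h$); conversely, show any flat plane at $[A]$ forces one of the two alternatives.

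The main obstacle I expect is bookkeeping the horizontality and O'Neill conditions precisely enough to extract \emph{exactly} these two clean equations — in particular, disentangling the two ``types'' of flat plane (one living essentially in the $S^1$-direction, the other in the $Sp(2)$-direction, whence the two separate equations joined by ``or'') and verifying that no mixed flat planes arise, as well as correctly invoking the Wilking--Tapp result \cite{Wi,Ta2} to guarantee that a horizontal flat plane upstairs really does project to a flat plane downstairs rather than getting curved by O'Neill's $A$-tensor. Since the statement is attributed to \cite{DE} and \cite[Lemma 3.1, Lemma 3.2]{Ke1}, the cleanest route is to cite those results directly and merely indicate how the present normalization (the explicit embeddings of $K$ and $H$, and the metric $-\operatorname{Re}\operatorname{Tr}$) matches their hypotheses, so that the two displayed conditions are literally their conclusions; a full from-scratch derivation would essentially reproduce Kerin's lemmas.
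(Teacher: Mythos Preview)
The paper does not prove this proposition at all: it is stated with the attribution ``based on work by Dearicott and Eschenburg \cite{DE}, Kerin \cite[Lemma 3.1, Lemma 3.2]{Ke1} proves the following,'' and no proof environment follows. So your final paragraph is exactly right --- the paper's ``proof'' is simply the citation, and everything preceding that in your proposal is surplus to what the paper actually does.

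That said, your sketch of the underlying argument is broadly accurate as a description of what happens in \cite{DE} and \cite{Ke1}: one does lift to $G\times K$ with the product bi-invariant metric, characterize flat planes there by commuting pairs, and then the horizontality conditions against the $S^1$- and $Sp(2)$-directions produce precisely the two alternatives \eqref{eq1} and \eqref{eq2}. The separation into two cases corresponds, in Kerin's treatment, to whether the flat plane meets the $\mathfrak{p}$-part (giving \eqref{eq1}) or lies in a complementary piece normalized by the $Sp(2)$-action (giving \eqref{eq2}). Your anticipated obstacle --- ruling out ``mixed'' flat planes and correctly invoking Wilking--Tapp --- is real and is handled in those references, not here. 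If you were writing this paper, the correct move is exactly what the authors do: state the proposition and cite \cite[Lemma 3.1, Lemma 3.2]{Ke1}.
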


This proposition will be key to proving Theorem \ref{thm:main}.  However, before proving Theorem \ref{thm:main}, we take a brief digression on the topology of the new examples.

\subsection{The topology of Bazaikin spaces}  In \cite{FloritZiller}, Florit and Ziller compute the topology of $\mathcal{B}_{\overline{q}}$.  In order to describe their results, it is convenient to define $q_6:=-(q_1+q_2+q_3 + q_4 + q_5)$.  Note that, from Proposition \ref{prop:isom}, the Bazaikin spaces corresponding to a $5$-tuple made by deleting one entry of $(q_1,...,q_6)$ are all diffeomorphic.  Using the shorthand $\sigma_i$ to denote $\sigma_i(q_1,q_2,q_3,q_4,q_5, q_6)$, the elementary symmetric polynomial of degree $i$ in the variables $q_i$, they prove:

\begin{thm}[Florit-Ziller]\label{thm:top}  For a Bazaikin space $\mathcal{B}_{\overline{q}}$ the cohomology ring is determined up to isomorphism by $H^8(\mathcal{B}_{\overline{q}})\cong \mathbb{Z}_s$ with $s = \sigma_3/8$.  Further, the Pontryagin classes are given by $$p_1 = -\sigma_2 u^2 \in H^4(\mathcal{B}_{\overline{q}};\mathbb{Z})\cong \mathbb{Z} \text{ and } p_2 = \frac{3p_1^2 - \sigma_4}{8} u^4 \in H^8(\mathcal{B}_{\overline{q}};\mathbb{Z}),$$ where $u\in H^2(\mathcal{B}_{\overline{q}})\cong \mathbb{Z}$ is a generator. 

\end{thm}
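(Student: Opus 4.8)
One can establish Theorem~\ref{thm:top} directly from the biquotient description of $\mathcal{B}_{\overline{q}}$ given in Section~\ref{sec:qp}. First, observe that the $H=Sp(2)\times S^1$ action on $SU(5)$ is effectively free with kernel the central subgroup $\Z_2=\langle(-I,-1)\rangle$, so the group acting effectively is $\bar H:=(Sp(2)\times S^1)/\Z_2$, and $\mathcal{B}_{\overline{q}}$ is homotopy equivalent to the Borel construction $E\bar H\times_{\bar H}SU(5)$, which sits in a fibration
\[
SU(5)\hookrightarrow \mathcal{B}_{\overline{q}}\xrightarrow{\ f\ }B\bar H .
\]
Rationally $B\bar H\simeq BSp(2)\times \mathbb{CP}^\infty$, so $H^*(B\bar H;\Q)=\Q[p_1,p_2,t]$ with $\deg p_i=4i$ and $\deg t=2$; integrally, however, the covering $Sp(2)\times S^1\to\bar H$ forces the generator $\tilde t$ of $H^2(B\bar H;\Z)$ to restrict to $2t$ on $B(Sp(2)\times S^1)$, exactly as $c_1\mapsto 2t$ under $B(SU(2)\times S^1)\to BU(2)=B\big((SU(2)\times S^1)/\Z_2\big)$. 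Since $H^2(\mathcal{B}_{\overline{q}})=\Z\langle u\rangle$ with $u=f^*\tilde t$, this factor of $2$ in degree $2$ is what produces the powers of $8=2^3$ in the statement.

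\emph{The spectral sequence.} The plan is to run the Serre (equivalently Eilenberg--Moore) spectral sequence of this fibration. The fibre cohomology $H^*(SU(5);\Z)=\Lambda(x_3,x_5,x_7,x_9)$ is an exterior algebra on transgressive generators, and the transgression of $x_{2i-1}$ is the difference of the $i$-th Chern classes of the two ``legs'' of the biquotient, i.e.\ of the virtual bundle over $B\bar H$ coming from the left representation $z\mapsto\diag(z^{q_1},\dots,z^{q_5})$ minus the right representation $(A+Bj,z)\mapsto\diag(A+Bj,z^{q})$. Pulling these back to $BSp(2)\times\mathbb{CP}^\infty$ (there the left leg is $\bigoplus_iL^{q_i}$ and the right leg is $(\mathrm{std}\,Sp(2))\oplus L^{q}$) and reducing successively, one finds that, after rewriting $t$ in terms of $\tilde t=2t$, the transgression of $x_5$ becomes $\tfrac{\sigma_3}{8}\,\tilde t^{\,3}$, where the $\sigma_j$ are the elementary symmetric functions of $q_1,\dots,q_5,q_6$ with $q_6=-\sum_{i=1}^5 q_i$ (cf.\ Proposition~\ref{prop:isom}). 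This kills $x_5$ and forces $H^6(\mathcal{B}_{\overline{q}})\cong H^8(\mathcal{B}_{\overline{q}})\cong\Z_{s}$ with $s=\sigma_3/8$; the transgressions of $x_3,x_7,x_9$ then give $H^{2k}(\mathcal{B}_{\overline{q}})=\Z\langle u^k\rangle$ for $k\le 4$ (with $u^3,u^4$ of order $s$), express the relevant Chern and Pontryagin classes of the leg bundles as multiples of $u^2$, and leave a $\Z$ in degrees $9,11,13$ and nothing else. A short argument with Poincar\'e duality shows the resulting ring is determined by $s$ alone.

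\emph{Pontryagin classes and integrality.} For the tangent bundle one uses the standard biquotient formula: writing $\mathfrak{su}(5)=\mathfrak{sp}(2)\oplus\mathfrak m$ as $Sp(2)$-modules, $p(T\mathcal{B}_{\overline{q}})$ is the pullback of $p(\mathfrak{su}(5))$ along the right leg divided by the pullback of $p(\mathfrak{sp}(2))$. Expanding in terms of $u$ and the $q_i$, using the relations from the transgression of $x_3$ and the $\tilde t=2t$ normalization, gives $p_1=-\sigma_2u^2$ and $p_2=\tfrac{3p_1^2-\sigma_4}{8}u^4=\tfrac{3\sigma_2^2-\sigma_4}{8}u^4$. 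One must then check the $8$'s divide. For admissible $\overline{q}$ all six $q_i$ are odd with $\sum q_i=0$, whence $3\sigma_3=\sum q_i^3\equiv\sum q_i=0\pmod 8$ (using $q_i^3\equiv q_i\pmod 8$ for $q_i$ odd) and $\gcd(3,8)=1$ give $8\mid\sigma_3$; the divisibility $8\mid 3\sigma_2^2-\sigma_4$ follows from a similar but slightly finer $2$-adic argument (using $8\sigma_4=(\sum q_i^2)^2-2\sum q_i^4$, $q_i^2\equiv1\pmod 8$, and the gcd conditions defining admissibility).

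The delicate part of this program — and the reason the answer is not the naive biquotient computation — is the $2$-primary bookkeeping: identifying the effective group $\bar H$, tracking the normalization $\tilde t\mapsto 2t$ through all four transgressions so that the torsion order comes out as $\sigma_3/8$ rather than $\sigma_3$ and the $p_2$-coefficient carries the correct $\tfrac18$, and verifying the two integrality congruences. Once the relevant bundles over $B\bar H$ are pinned down, the collapse pattern of the spectral sequence and the characteristic-class expansions are routine.
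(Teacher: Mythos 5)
First, a point of orientation: the paper does not prove this theorem at all.  It is quoted from Florit and Ziller \cite{FloritZiller} (note the attribution in the theorem header) and is used only as a black box in Proposition~\ref{prop:newex}, so there is no in-paper argument to measure your proposal against; what you have written is an outline of how one would reconstruct the Florit--Ziller computation.  As an outline it is largely on target: the kernel of the action is indeed $\langle(-I,-1)\rangle$; the generator of $H^2(B\bar H;\mathbb{Z})$ for $\bar H=(Sp(2)\times S^1)/\mathbb{Z}_2$ does restrict to $2t$ (the composite $S^1\to\bar H\to \bar H/Sp(2)\cong S^1$ is $z\mapsto z^2$); the transgressions in the fibration $SU(5)\to\mathcal{B}_{\overline{q}}\to B\bar H$ are differences of Chern classes of the two legs; and your identity $3\sigma_3=\sum q_i^3\equiv\sum q_i=0\pmod 8$ is a correct, clean explanation of $8\mid\sigma_3$.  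The tangent-bundle computation via the right leg, combined with the relation $y_1=\sigma_2(q_1,\dots,q_5)t^2$ imposed by the transgression of $x_3$, does reproduce $p_1=-\sigma_2u^2$.

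That said, as a proof this has genuine gaps rather than merely omitted routine steps.  The entire $2$-primary content of the theorem --- which you correctly identify as the delicate part --- is asserted, not established: you determine $H^*(B\bar H)$ only rationally and then claim the integral answer follows from the degree-$2$ normalization, but to conclude $H^6\cong H^8\cong\mathbb{Z}_{\sigma_3/8}$ you need the integral ring $H^*(B\bar H;\mathbb{Z})$ (it is torsion-free, via the collapse of the spectral sequence of $BSp(2)\to B\bar H\to BS^1$, but this must be said) together with an integral transgression argument, and ``a short argument with Poincar\'e duality shows the resulting ring is determined by $s$ alone'' is precisely the claim that needs proving.  Moreover, your treatment of $p_2$ chases the wrong question: $p_2$ lives in $H^8\cong\mathbb{Z}_s$ where $8$ is a unit (the paper records $s\equiv\pm1\pmod 6$ immediately after the theorem), so the fraction $\tfrac{3p_1^2-\sigma_4}{8}$ is to be read in $\mathbb{Z}_s$ and no integral divisibility $8\mid 3\sigma_2^2-\sigma_4$ is required --- which is fortunate, since the ``finer $2$-adic argument'' you defer to is never supplied.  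In short: a sound plan that faithfully reflects the known proof strategy, but the steps carrying the actual content are left as assertions; for the purposes of this paper the correct move is simply to cite \cite{FloritZiller}.
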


Florit and Ziller prove that $\sigma_3/8 \cong \pm 1\pmod{6}$, so $8\in \mathbb{Z}_s$ has a multiplicative inverse.  They also note that $-\sigma_2 = \frac{1}{2} \sum_{i=1}^6 q_i^2$, so a given value of $p_1$ can only occur for finitely many Bazaikin spaces.

Below, we shall prove that all Bazaikin spaces are quasi-positively curved.  Previously, this was only known for the subset for which at least four of $(q_1,...,q_5)$ are positive.  We now show that new topological types appear among this enlarged class of quasi-positively curved Bazaikin spaces.

\begin{proposition}\label{prop:newex}The Bazaikin space $\mathcal{B}_{\overline{q}}$ with  $\overline{q} = (7,1,1,-3,-3)$ is not homeomorphic to any Bazaikin space $\mathcal{B}_{\overline{r}}$ with four entries of $\overline{r} = (r_1,...,r_5)$ positive.

\end{proposition}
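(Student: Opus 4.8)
The plan is to use the topological invariants from Theorem \ref{thm:top} to distinguish $\mathcal{B}_{\overline{q}}$ with $\overline{q} = (7,1,1,-3,-3)$ from all Bazaikin spaces with four positive entries. First I would compute the relevant invariants for $\overline{q} = (7,1,1,-3,-3)$: here $q_6 = -(7+1+1-3-3) = -3$, so the full $6$-tuple is $(7,1,1,-3,-3,-3)$, giving $-\sigma_2 = \tfrac12\sum q_i^2 = \tfrac12(49+1+1+9+9+9) = 39$, and I would also compute $s = \sigma_3/8$ and $\sigma_4$ to pin down the cohomology ring and $p_2$. Since $p_1 = -\sigma_2 u^2 = 39 u^2$ and the value of $p_1$ determines $-\sigma_2$ (a homeomorphism invariant, since $p_1$ is — rationally $p_1$ is a homeomorphism invariant, and here $H^4 \cong \mathbb{Z}$ so the integral class is determined up to sign by the rational one together with a choice of generator $u$), any homeomorphic $\mathcal{B}_{\overline{r}}$ must have $\tfrac12\sum_{i=1}^6 r_i^2 = 39$, i.e. $\sum_{i=1}^6 r_i^2 = 78$, where $(r_1,\dots,r_6)$ is the associated $6$-tuple with $r_6 = -(r_1+\cdots+r_5)$.

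The key step is then a finite combinatorial search. Because $\sum r_i^2 = 78$ is a fixed bound and the $r_i$ are odd integers, there are only finitely many $6$-tuples of odd integers (up to sign and permutation) with this property, and Florit-Ziller already remark that a given $p_1$ occurs for only finitely many Bazaikin spaces. I would enumerate the admissible candidates: odd $r_i$ summing (over all six) to zero, satisfying the gcd condition, and with at least four of $\{r_1,\dots,r_5\}$ positive — but one must remember that "four of $\overline{r}=(r_1,\dots,r_5)$ positive" is not invariant under the $q_i \mapsto q_6$ moves, so I would need to check, for each admissible $6$-tuple with $\sum r_i^2 = 78$, whether \emph{some} choice of five of the six entries has at least four positive. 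For each such candidate I then compare the remaining invariants ($s = \sigma_3/8$, and if necessary $p_2$, equivalently $\sigma_4$) against those of $(7,1,1,-3,-3,-3)$, and verify that none matches — or, better, argue that for any $6$-tuple with four of some five-element subset positive and $\sum r_i^2 = 78$, the sign pattern forces $q_6$ to be large and negative in a way incompatible with the specific value of $\sigma_3$ or $\sigma_2$ we computed. Most likely the cleanest route is: the only way to get four positive entries among five is essentially to have at most two of the six $r_i$ negative (one could be $r_6$), and I would show such sign patterns with $\sum r_i^2 = 78$ simply do not produce $-\sigma_2 = 39$, or if they do, $\sigma_3$ differs.

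The main obstacle will be making the finite search genuinely complete and transparent rather than a brute-force list: I need to be careful that the notion of "four positive entries" is examined on all six of the five-element subsets of the associated $6$-tuple, and that I have correctly accounted for the sign ambiguity (replacing $\overline{r}$ by $-\overline{r}$) and the permutation ambiguity in Proposition \ref{prop:isom}. A secondary subtlety is justifying that $p_1 \in H^4(\mathcal{B}_{\overline{q}};\mathbb{Z}) \cong \mathbb{Z}$, and hence $\sigma_2$, is a homeomorphism invariant: $p_1$ is not a priori a homeomorphism invariant integrally, but its image in $H^4 \otimes \mathbb{Q}$ is (Novikov), and since $H^4 \cong \mathbb{Z}$ is torsion-free the integral $p_1$ is determined up to the sign coming from the choice of generator $u$; as $-\sigma_2 > 0$ this sign ambiguity is harmless. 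Once these bookkeeping points are handled, the conclusion is immediate: no Bazaikin space with four positive entries shares the invariants of $\mathcal{B}_{(7,1,1,-3,-3)}$, so in particular none is homeomorphic to it.
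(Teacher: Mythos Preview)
Your plan is correct and essentially identical to the paper's own proof: compute $p_1=-\sigma_2=39$ (justified via Novikov, using that $H^4\cong\mathbb{Z}$ is torsion-free), reduce to the finitely many odd $6$-tuples with $\sum r_i^2=78$ and $\sum r_i=0$, find that up to sign and permutation the only admissible possibilities are $(7,-3,-3,-3,1,1)$ and $(5,5,-3,-3,-3,-1)$, separate them by $|\sigma_3|$ ($88$ vs.\ $56$), and observe that $(7,-3,-3,-3,1,1)$ has a $3$--$3$ sign split so no five-element subset has four entries of the same sign. Your anticipated subtleties (checking all five-element subsets, the sign ambiguity in $u$) are exactly the bookkeeping points the paper addresses.
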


\begin{proof}  Suppose $\overline{r} = (r_1,...,r_5)$ is admissible and set $r_6:=-\sum_{\ell = 1}^5 r_\ell$.  Assume that  $\mathcal{B}_{\overline{q}}$ is homeomorphic to $\mathcal{B}_{\overline{r}}$.  We will first show that $\overline{r}$ must be related to $\overline{q}$ via the operations in Proposition \ref{prop:isom}.

Using Theorem \ref{thm:top}, one easily computes that $p_1(\mathcal{B}_{\overline{q}}) = 39$, where we have chosen an identification of $H^4(\mathcal{B}_{\overline{q}})$ with $\mathbb{Z}$.  Novikov \cite{No} has shown that the rational Pontryagin classes are homeomorphism invariants, so we must have $p_1(\mathcal{B}_{\overline{r}}) = 39$ as well.  We will show that there is precisely one other diffeomorphism type of Bazaikin space with $p_1 = 39$, corresponding to the integers $(5,5,-3,-3,-3,-1)$ and that the order of $H^8$ distinguishes them.

Since $p_1(\mathcal{B}_{\overline{r}}) = 39$, we find \begin{equation}\label{eqn:sos}78 = \sum_{\ell = 1}^6 r_i^2.\end{equation}

Obviously each $r_i$ is bounded above by $\lfloor\sqrt{78}\rfloor = 8$.  Since each $r_i$ is odd, we have $|r_i| \leq 7$.  In addition, we must obviously have at least one $|r_i| \geq 5$.

Thus, by reordering the $r_i$ and replacing $\overline{r}$ by $-\overline{r}$ if necessary, we may assume $r_1 \in \{ 5,7\}$ has the largest absolute value of all the $r_i$.  In addition, we assume that $|r_{i+1}|\leq |r_i|$ for all $i$.

Assume initially $r_1 = 7$.  Then, it follows from Equation \eqref{eqn:sos} that $|r_2|\in \{3,5\}$.  If $|r_2| = 5$, it is easy to see $(|r_i|) = (7,5,1,1,1,1)$.  The only choice of signs leading which satisfies $\sum r_i = 0$ is $(7,-5,1,1,-1,1)$.  But then $gcd(r_4 + r_5, r_1 + r_3) = 8 > 2$, so this is not admissible.  If $|r_2| = 3$, then it is easy to see that $(|r_i|) = (7,3,3,3,1,1)$.  Here, the only choice of signs which satisfies $\sum r_i = 0$ is $(7,-3,-3,-3,1,1)$.  Deleting a $-3$ gives $\mathcal{B}_{\overline{q}}$ above.  This concludes the case where $r_1 = 7$.

So, next assume $r_1 = 5$.  Since $5\cdot 3^2 \leq 78-25$, and $|r_2|\leq r_1$, it easily follows that $|r_2| =5$.  From here, it is easy to see that there are precisely two possibilities for $(|r_i|):  (5,5,5,1,1,1)$ and $(5,5,3,3,3,1)$.  In the first case, there is no way to assign signs to make the sum zero.  In the second case, the unique way to do so is as $(5,5,-3,-3,-3,-1)$.  One can check that $-(5,5,-3,-3,-3,-1)$ is, in fact, admissible.

Using Theorem \ref{thm:top} again, one easily computes that $$|\sigma_3(7,-3,-3,-3,1,1)| = 88 \neq 56 = |\sigma_3(5,5,-3,-3,-3,-1)|,$$ so these two examples cannot be homotopy equivalent.

From Proposition \ref{prop:isom}, we therefore conclude that if $\mathcal{B}_{\overline{q}}$ is homeomorphic to $\mathcal{B}_{\overline{r}}$, then $(r_1,...,r_6)$ is, up to sign, a permutation of $(7,-3,-3,-3,1,1)$.   Since both $\pm (7,-3,-3,-3,1,1)$ consists of three positive and three negative entries, no such choice of $\overline{r}$ can have $4$ $r_i$ of the same sign.
\end{proof}

Apart from Bazaikin spaces, the only known $13$-dimensional simply connected manifolds admitting quasi-positively curved metrics are diffeomorphic to $S^{13}$, a circle quotient of $S^7\times S^7$ \cite{Ke2}, and $T^1 S^6$, the unit tangent bundle of $S^6$ \cite{Wi}.  These spaces have the rational cohomology groups of $S^{13}$, $\mathbb{C}P^3\times S^7$, and $S^{13}$ respectively.  Since every Bazaikin space has the rational cohomology groups of $\mathbb{C}P^2\times S^9$ every Bazaikin space is homotopically distinct from these three examples.  Thus, the space $\mathcal{B}_{\overline{q}}$ of Proposition \ref{prop:newex} is not homeomorphic to any previously known manifold admitting a metric of quasi-positive curvature.

Lastly, we remark that we have written a computer program which finds all Bazaikin spaces with a given $p_1\leq 9615$.  Among these, there are over $87000$ Bazaikin spaces, including almost $52000$ which were not previously known to admit a metric of quasi-positive curvature.  Using Theorem \ref{thm:top}, we found none of the new examples are homeomorphic to any previously known example, except when related via Proposition \ref{prop:isom}.  This suggests that there are infinitely many homeomorphism types among our new examples, but we were unable to prove it.  See also \cite{FloritZiller} for similar computer calculations among an even larger range.

\section{Metric properties of Bazaikin spaces}\label{sec:open}

In this section, we begin by showing that, apart from permutations of $\overline{q} = (1,1,1,-1,-3)$, the natural metric on all Bazaikin spaces is quasi-positively curved.  We then show that this exceptional Bazaikin space, in fact, has zero-curvature planes at every point.  The second subsection contains the proof that apart from the previously known examples admitting strictly positive or almost positive sectional curvature, all the remaining examples have open sets of points containing at least one zero-curvature plane.

\subsection{Quasi-positive curvature}

The goal of this subsection is to show that the natural metric on $\mathcal{B}_{\overline{q}}$ is quasi-positively curved, except when $\overline{q}$ is a permutation of $(1,1,1,-1,-3)$.  We begin with the key property which sets $(1,1,1,-1,-3)$ apart from all other admissible $5$-tuples.

\begin{lem}\label{lemma:dblsum}
Suppose $\overline{q}=(q_1,...,q_5)$ is admissible and not a permutation of $(1,1,1,-1,-3)$.  Then there are distinct $a,b,c,d\in \{1,2,3,4,5\}$ such that $q_a + q_b$ and $q_c + q_d$ are either both positive or both negative.

\end{lem}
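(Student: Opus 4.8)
The plan is to argue by contradiction: suppose no such pair exists. This means that for every choice of two disjoint pairs $\{a,b\}$ and $\{c,d\}$ from $\{1,2,3,4,5\}$, the sums $q_a+q_b$ and $q_c+q_d$ do not have the same sign. Since $\overline{q}$ is admissible, no sum $q_i+q_j$ is zero (it is congruent to $2 \bmod 4$ by the gcd condition, hence nonzero), so ``not the same sign'' means literally opposite signs. I would first record the combinatorial consequence: if we color each pair $\{i,j\}$ by the sign of $q_i+q_j$, then any two disjoint pairs receive opposite colors. Equivalently, looking at the Petersen graph (vertices = $2$-subsets of a $5$-set, edges = disjointness), a proper $2$-coloring is forced; but the Petersen graph is connected and not bipartite (it has $5$-cycles), so no proper $2$-coloring exists — contradiction — \emph{unless} one of the two color classes is empty. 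So the only surviving possibility is that all sums $q_i+q_j$ have the same sign, i.e. all are positive (since at least three $q_i$ are positive, admissibility forces the all-positive case rather than all-negative).

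Next I would handle that remaining case: all ten sums $q_i+q_j$ are positive. If all five $q_i$ were positive we would immediately have two disjoint pairs with positive sums, so at least one $q_i$ is negative; by the admissibility normalization exactly one or two are negative, but if two were negative, say $q_4,q_5<0$, and $q_1,q_2,q_3>0$, then $q_1+q_2>0$ and... wait, $q_4+q_5$ could be negative, contradicting ``all sums positive''; so in the all-sums-positive case exactly one $q_i$ is negative. Reorder so $q_5<0<q_1\le q_2\le q_3\le q_4$. Then $q_1+q_5>0$ forces $q_1>|q_5|=-q_5$, and since the $q_i$ are odd and $q_5\le -1$, this gives $q_1\ge -q_5+2\ge 3$ when $q_5\le -1$... more carefully, $q_1 > -q_5 \ge 1$ and both odd forces $q_1 \ge -q_5 + 2$. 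I would then derive the gcd constraint: $\gcd(q_1+q_5, q_2+q_3) = 2$ and similar, combined with oddness, to pin down how small things can be. The key tension is that $q_1+q_5$ is small and positive while $q_1,q_2,q_3,q_4 \ge q_1 \ge 3$, and we need all pairwise gcds of sums to equal $2$.

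The main obstacle I anticipate is precisely this last case analysis — showing the all-sums-positive, one-negative-entry configuration forces $\overline{q}=(1,1,1,-1,-3)$ up to the sign flip, using only oddness and the gcd conditions. I expect it to go: from $q_1+q_5>0$ with $q_5<0$ odd and $q_1$ odd we get $q_5=-1$ is the only option making $q_1$ small (if $q_5\le -3$ then $q_1\ge 5$, and then $q_1+q_2 \ge 10$, $q_3+q_4\ge 10$, and one checks $\gcd$ of two such even numbers with the $\equiv 2 \bmod 4$ constraint can be made $>2$, or rather the real squeeze is that we would actually \emph{succeed} in finding the desired pair — so I must double-check whether $q_5\le -3$ is genuinely excluded or whether it also only arises for non-admissible tuples). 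Then with $q_5=-1$: $q_i+q_5 = q_i-1>0$ is automatic, but I still need $\gcd(q_i-1,q_j+q_k)=2$ for the relevant splittings, plus at least one $q_i+q_j$ among $\{1,2,3,4\}$ must stay controlled. Setting $q_1=q_2=q_3=1$ forces $q_4$ odd with $q_4+1\equiv 2\bmod 4$ so $q_4\equiv 1\bmod 4$; but we also need $-(q_1+q_2+q_3+q_4+q_5)$ considerations and the full gcd list, which I expect to collapse to $q_4\in\{1\}$ (giving $(1,1,1,1,-1)$ — but that tuple does have disjoint equal-sign pairs! $q_1+q_2>0$, $q_3+q_4>0$) — so in fact $(1,1,1,1,-1)$ is \emph{not} a counterexample, meaning the contradiction closes there too, and the only tuple that genuinely escapes is $(1,1,1,-1,-3)$, which is not of the all-sums-positive form at all — so I should re-examine: the exceptional tuple $(1,1,1,-1,-3)$ has $q_1+q_2=2>0$ and $q_4+q_5=-4<0$ and indeed \emph{no} two disjoint pairs of the same sign, so it does NOT fall under ``all sums positive''; thus the Petersen-graph argument's escape hatch (empty color class) must instead be shown to be vacuous for admissible tuples \emph{other than after re-examination}, and the real work is: the $2$-coloring of the Petersen graph induced by an admissible $\overline q$ is proper only if $\overline q$ is a permutation of $(1,1,1,-1,-3)$ or all sums have one sign, and the latter never happens for admissible $\overline q$ with a negative entry — I would verify this directly. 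I will structure the writeup as: (1) reduce to the Petersen-graph proper-$2$-coloring statement; (2) show a connected non-bipartite graph forces one color class empty; (3) show the empty-class case is impossible for admissible $\overline q$; (4) classify the proper colorings of the Petersen graph with both classes nonempty and match them to $\pm(1,1,1,-1,-3)$ via the sign pattern, checking oddness and gcd rule out everything else.
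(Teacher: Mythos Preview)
Your Petersen-graph idea is appealing, but the argument has a genuine gap at its very first step: the claim that no sum $q_i+q_j$ is zero is false. The justification ``$q_i+q_j\equiv 2\pmod 4$'' does not follow from oddness (odd plus odd can be $\equiv 0\pmod 4$), and the gcd condition does not rule out a zero sum either---it only forces the complementary sums to be $\pm 2$. Concretely, $(1,1,1,3,-3)$ is admissible and has $q_4+q_5=0$. More to the point, the exceptional tuple $(1,1,1,-1,-3)$ itself has three zero sums ($q_i+q_4=0$ for $i=1,2,3$); that is precisely how it escapes. If your claim were true, the non-bipartiteness of the Petersen graph would immediately finish the proof with \emph{no} exception, contradicting the statement of the lemma.

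Once zero sums are allowed, the clean $2$-colouring picture collapses: you now need the positive pairs and the negative pairs each to form an independent set in the Petersen graph, with the zero-sum pairs floating free. The remainder of your proposal does not address this; the ``escape hatch'' of an empty colour class is not a proper $2$-colouring at all (every edge is then monochromatic), and step~(4)---classifying proper $2$-colourings with both classes nonempty---is vacuous for a non-bipartite graph. So the place where the actual content lies (tuples with a zero sum) is exactly the place your outline does not reach. By contrast, the paper argues directly: if four $q_i$ are positive it is trivial; otherwise with $q_1,q_2,q_3>0>q_4,q_5$ one compares $|q_4|,|q_5|$ to $q_1,q_2,q_3$, and in the boundary case $|q_4|=q_1$ the gcd conditions force $q_2=q_3=1$, $q_5=-3$, $q_1\in\{1,3\}$, pinning down the exception in a few lines. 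That case analysis is short and avoids the zero-sum subtlety altogether.
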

\begin{proof}Recall from Proposition \ref{prop:isom} that permutations of the $q_i$ give rise to isometric Bazaikin spaces.

If at least four of $\overline{q}=(q_1,...,q_5)$ are positive, say $q_1,q_2,q_3,q_4>0$. Then one can take $(a,b,c,d) = (1,2,3,4)$.

Thus, we may assume $q_1,q_2,$ and $q_3$ are positive and $q_4$ and $q_5$ are negative.  If at least one of  $|q_4|,|q_5|$ is less than at least one of  $q_1,q_2,q_3$, say $|q_4|<q_1$ without loss of generality, then one can take $(a,b,c,d) = (1,4,2,3)$. 

Thus, we may assume both $|q_4|,|q_5|$ are greater than or equal to all $q_1,q_2,q_3$.  If both $|q_4|,|q_5|$ greater than each of $q_1,q_2,q_3$, then one may take $(a,b,c,d) = (1,4,2,5)$.

So, we may assume that equality occurs; say $|q_4|=q_1$ without loss of generality. Since $q_1+q_4=0$, $2 = \gcd(q_2+q_3,q_1 + q_4)$ implies $q_2+q_3=2$, that is, that $q_2=q_3=1$.  Likewise, $2 = \gcd(q_2+q_5, q_1+q_4)$, which then implies $q_5=-3$.  Recalling that $|q_5|\geq |q_1|$, this implies $q_1 \in \{1,3\}$, giving rise to $(1,1,1,-1,-3)$ and $(3,1,1,-3,-3)$.  In the latter case, one can take $(a,b,c,d) = (2,4,3,5)$.

\end{proof}

\begin{thm}\label{thm:qp}
If $\overline{q} = (q_1,...,q_5)$ is admissible and not a permutation of $(1,1,1,-1,-3)$, then the natural metric on $\mathcal{B}_{\overline{q}}$ is quasi-positively curved.
\end{thm}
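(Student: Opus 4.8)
The plan is to exhibit a single point $[A]\in\mathcal{B}_{\overline{q}}$ at which no zero-curvature plane exists, using the criterion in Proposition \ref{prop:curvcond}. By Proposition \ref{prop:isom} we are free to permute the $q_i$, so by Lemma \ref{lemma:dblsum} we may fix distinct indices $a,b,c,d$ with $q_a+q_b$ and $q_c+q_d$ of the same sign; after a further permutation and possibly replacing $\overline{q}$ by $-\overline{q}$, assume $(a,b,c,d)=(1,2,3,4)$ with $q_1+q_2>0$ and $q_3+q_4>0$. The natural candidate point is $[A]$ with $A$ chosen so that the last column of $A$ is concentrated in the slots whose $q_i$ are ``small'' — the cleanest first try is $A = I$, the identity, so that $|A_{\ell 5}|^2 = \delta_{\ell 5}$; then the right-hand side of Equation \eqref{eq1} is $q_5$, and \eqref{eq1} fails precisely when $\sum_\ell q_\ell \ne q_5$, i.e. when $q_1+q_2+q_3+q_4\ne 0$. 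If $A=I$ happens to satisfy $q_1+q_2+q_3+q_4=0$, one instead perturbs the last column slightly within $S^9$ so that the weighted sum $\sum |A_{\ell 5}|^2 q_\ell$ moves off the value $\sum q_\ell$; this is possible as long as not all $q_\ell$ are equal, which never happens for admissible tuples since the $q_i$ are odd with gcd $1$ and at least one pair-sum is nonzero. So ruling out \eqref{eq1} is routine.

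The substance is ruling out \eqref{eq2}: we must show $\sum_{\ell=1}^5\bigl(|(Ah)_{\ell 2}|^2+|(Ah)_{\ell 4}|^2\bigr)q_\ell \neq 0$ for \emph{every} $h\in Sp(2)\subseteq SU(5)$. With $A=I$ this is $\sum_\ell (|h_{\ell 2}|^2 + |h_{\ell 4}|^2) q_\ell$. Write $x_\ell = |h_{\ell 2}|^2$ and $y_\ell = |h_{\ell 4}|^2$ for $\ell = 1,\dots,5$. Since $h\in Sp(2)\subseteq SU(5)$, the fifth row and column of $h$ are those of the identity in the extra slot, so $h_{52}=h_{54}=0$, giving $x_5=y_5=0$; the second and fourth columns of $h$ are unit vectors in $\mathbb{C}^4$, so $\sum_{\ell=1}^4 x_\ell = \sum_{\ell=1}^4 y_\ell = 1$; and the block structure $Sp(2)\subseteq SU(5)$, via Proposition \ref{prop:emb}, forces $x_1 = y_3$ and $x_3 = y_1$ (the relations $|h_{i2}|=|h_{j4}|$ for $|i-j|=2$), while $x_2$ and $x_4$ are free subject to $x_1+x_2+x_3+x_4=1$. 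Hence the sum to be controlled is
\begin{equation*}
S(h) \;=\; (q_1+q_3)(x_1+x_3) \;+\; q_2 x_2 + q_4 x_4 \;+\; q_2 y_2 + q_4 y_4,
\end{equation*}
with the feasible region a product of two simplices linked only through $x_1+x_3$ and the independent pair $(y_1,y_3)=(x_3,x_1)$. One then checks that $S(h)=0$ would force, by extremality in the $y$-variables, either $y_2$ or $y_4$ to absorb the sign — and crucially the \emph{linked} term $q_1+q_3$ together with $q_2+q_4$ being available: after re-examining which pair of same-sign sums Lemma \ref{lemma:dblsum} actually supplies, I would arrange the permutation so that it is exactly two of the indices $\{1,3\}$ and two of $\{2,4\}$ (or an analogous pairing) whose pair-sums are positive, which makes $S(h)$ a sum of terms that cannot all be made to cancel. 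The right bookkeeping is: choose the permutation so that columns $2$ and $4$ of $h$ only ever pick up $q_i$ with, say, $q_i + q_j > 0$ for the relevant linked pairs, so that $S(h)$ is a positive combination of nonnegative quantities that sum to something bounded below by a positive constant.

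The main obstacle I anticipate is precisely the combinatorial matching between what Lemma \ref{lemma:dblsum} guarantees (one same-sign pair of pair-sums among arbitrary indices) and the rigid index pattern that the $Sp(2)$-embedding imposes on $(x_\ell,y_\ell)$ (namely the coupling $x_1=y_3$, $x_3=y_1$ after ordering, and $x_5=y_5=0$). It may be necessary to allow the distinguished point $[A]$ to be something other than $[I]$ — e.g. a permutation matrix, or a matrix whose first four rows lie in a chosen coordinate subspace — so that the weights $q_\ell$ appearing against $x_\ell, y_\ell$ are the ``good'' ones. Concretely I would: (i) invoke Lemma \ref{lemma:dblsum} and Proposition \ref{prop:isom} to normalize $\overline{q}$; (ii) pick $A$ a permutation matrix realigning the $q_i$ so that the $Sp(2)$-coupled slots carry same-sign pair-sums; (iii) verify \eqref{eq1} fails at $[A]$, adjusting $A$ within its orbit if the last column lands on a bad value; (iv) expand \eqref{eq2} using Proposition \ref{prop:emb} to get the constraints $x_5=y_5=0$, the two simplex constraints, and the coupling relations, then show the resulting linear functional on this polytope is bounded away from $0$; (v) conclude via Proposition \ref{prop:curvcond} that $[A]$ has all two-planes positively curved, and recall from Section \ref{sec:qp} that the natural metric is everywhere non-negatively curved, so $\mathcal{B}_{\overline{q}}$ is quasi-positively curved. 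Step (iv), together with getting the normalization in (ii) to line up with Lemma \ref{lemma:dblsum}, is where essentially all the work lies.
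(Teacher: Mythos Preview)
Your outline is the paper's own approach --- exhibit $[I]\in\mathcal{B}_{\overline{q}}$ as a point of strictly positive curvature via Proposition~\ref{prop:curvcond}, after permuting the $q_i$ using Lemma~\ref{lemma:dblsum} --- but there is a real gap in step~(iv), and it is exactly the ``combinatorial matching'' you flag.

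The gap is that Proposition~\ref{prop:emb} gives you \emph{four} coupling relations, not two: $|h_{i2}|=|h_{j4}|$ whenever $|i-j|=2$ yields $x_1=y_3$, $x_3=y_1$ \emph{and} $x_2=y_4$, $x_4=y_2$ (in your notation). With all four, the sum in \eqref{eq2} at $A=I$ collapses cleanly to
\[
(x_1+y_1)(q_1+q_3)+(x_2+y_2)(q_2+q_4),
\]
a nonnegative combination of $q_1+q_3$ and $q_2+q_4$ whose coefficients cannot both vanish (else the second column of $h$ is zero). So the pairing forced by the $Sp(2)$-embedding is $(1,3)$ and $(2,4)$, not $(1,2)$ and $(3,4)$. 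Once you normalize so that $q_1+q_3$ and $q_2+q_4$ have the same sign --- which is just a relabelling of what Lemma~\ref{lemma:dblsum} hands you --- Equation~\eqref{eq2} never vanishes. Moreover, Equation~\eqref{eq1} at $A=I$ reads $(q_1+q_3)+(q_2+q_4)=0$, which is automatically false under this same normalization; no perturbation argument is needed.

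In short: your plan works and coincides with the paper's proof, but you only recorded half of the $Sp(2)$ symmetry, which left your expression for $S(h)$ with loose $q_2,q_4$ terms and made the index-matching look harder than it is. Add the relations $x_2=y_4$, $x_4=y_2$, permute so that $\{a,b\}=\{1,3\}$ and $\{c,d\}=\{2,4\}$, and both steps~(iii) and~(iv) become one-line computations.
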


\begin{proof}
 Suppose $\overline{q} = (q_1,...,q_5)$ is admissible and that $\overline{q}$ is not a permutation of $(1,1,1,-1,-3)$. Then by Lemma~\ref{lemma:dblsum}, there is a four element subset $\{a,b,c,d\}\subseteq \{1,2,3,4,5\}$ for which $q_a + q_b$ and $q_c + q_d$ have the same sign.  Since permuting the $q_i$ gives isometric Bazaikin spaces, we may reorder $\overline{q} = (q_1,...,q_5)$ so that $q_1+q_3$ and $q_2+q_4$ have the same sign.

By Proposition~\ref{prop:curvcond}, we need only find $A\in SU(5)$ for which neither Equation~\eqref{eq1} nor Equation~\eqref{eq2} is satisfied.  In fact, we can take $A=I$, the identity matrix. Then, Equation~\eqref{eq1} becomes 
\begin{equation*}
    q_1+q_2+q_3+q_4+q_5=q_5,
\end{equation*}
which is not satisfied since $q_1+q_3$ and $q_2+q_4$ have the same sign.

Now, suppose $h\in Sp(2)\subseteq SU(5)$ is arbitrary.  As $Ah = h$, Equation \eqref{eq2} becomes $$ 0 = \sum_{\ell = 1}^5 (|h_{\ell 2}|^2 + |h_{\ell 4}|^2) q_\ell.$$  From Proposition \ref{prop:emb}, we know $|h_{12}|^2=|h_{34}|^2$, $|h_{14}|^2=|h_{32}|^2$, $|h_{22}|^2=|h_{44}|^2$, ${|h_{24}|^2=|h_{42}|^2}$, and $|h_{52}| = |h_{54}| = 0$. Therefore, Equation~\eqref{eq2} becomes \begin{equation*}
   0=(|h_{12}|^2+|h_{14}|^2)(q_1+q_3)+(|h_{22}|^2+|h_{24}|^2)(q_2+q_4).
  \end{equation*}
  Since $q_1+q_3$ and $q_2+q_4$ have the same sign, Equation~\ref{eq2} is not satisfied unless $h_{12},h_{14},h_{22},h_{24}=0$, which implies $h \notin Sp(2) \subseteq SU(5)$.  Thus $\mathcal{B}_{\overline{q}}$ is quasi-positively curved.
  \end{proof}

  We now show the hypothesis that $\overline{q}$ is not a permutation of $(1,1,1,-1,-3)$ is essential by finding zero-curvature planes at every point of this exceptional Bazaikin space.

\begin{proposition}\label{prop:exzero}  The natural metric on $\mathcal{B}_{\overline{q}}$ for $\overline{q} = ( 1,1,1,-1,-3)$ has a zero-curvature plane at every point.

\end{proposition}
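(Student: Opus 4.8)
The plan is to fix an arbitrary $A\in SU(5)$ and show that the point $[A]\in\mathcal{B}_{\overline q}$ carries a zero-curvature plane by verifying Equation~\eqref{eq2} of Proposition~\ref{prop:curvcond} for a suitable $h\in Sp(2)\subseteq SU(5)$; Equation~\eqref{eq1} will play no role at all. Write $Q=\diag(1,1,1,-1,-3)$ and set $M=A^{*}QA$, a Hermitian matrix with the same spectrum $\{1,1,1,-1,-3\}$ as $Q$, and let $M'$ be its leading $4\times 4$ principal block. Eigenvalue interlacing for principal submatrices forces the eigenvalues of $M'$ to be $1,1,\mu,\nu$ with $-1\le\mu\le 1$ and $-3\le\nu\le-1$; in particular the greatest eigenvalue of $M'$ equals $1$ and has multiplicity at least two, and $\mu$, the third largest eigenvalue, is $\ge -1$. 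These two facts are precisely what single out the tuple $(1,1,1,-1,-3)$.

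The next step is to translate Equation~\eqref{eq2} into a statement about a quadratic form on $S^{7}$. Given a unit vector $v\in S^{7}\subseteq\mathbb{C}^{4}$, Proposition~\ref{prop:emb} produces $h\in Sp(2)\subseteq SU(5)$ whose second column is $(v,0)^{T}$; from the block form of the embedding $Sp(2)\hookrightarrow SU(5)$, its fourth column is then $(jv,0)^{T}$, where $j(z_{1},z_{2},z_{3},z_{4})=(-\overline{z_{3}},-\overline{z_{4}},\overline{z_{1}},\overline{z_{2}})$ is the standard quaternionic structure on $\mathbb{C}^{4}$ (anti-unitary, with $j^{2}=-\mathrm{id}$). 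Since the fifth entries of these two columns vanish, a short computation using $\tilde A^{*}Q\tilde A=M'$, where $\tilde A$ denotes the first four columns of $A$, rewrites Equation~\eqref{eq2} as $g(v):=\langle M'v,v\rangle+\langle M'(jv),jv\rangle=0$, with $\langle\cdot,\cdot\rangle$ the Hermitian inner product. Thus it suffices to produce a zero of the continuous function $g\colon S^{7}\to\mathbb{R}$.

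Finally I would show that $g$ attains both a nonpositive and a nonnegative value, after which the intermediate value theorem on the connected space $S^{7}$ finishes the argument. That $g$ attains a nonpositive value is immediate: if $w$ is a unit $\nu$-eigenvector of $M'$, then $g(w)=\nu+\langle M'(jw),jw\rangle\le\nu+1\le 0$, since $jw$ is a unit vector and every Rayleigh quotient of $M'$ is at most its greatest eigenvalue $1$. The main obstacle is to show that $g$ attains a nonnegative value. I would argue by contradiction: if $g<0$ everywhere, then evaluating $g$ on the unit vectors $w$ of the $1$-eigenspace $E_{1}$ of $M'$ (which has complex dimension $\ge 2$) forces $\langle M'(jw),jw\rangle<-1$, that is, $\langle M'u,u\rangle<-1$ for every unit vector $u$ in the complex subspace $j(E_{1})$, which again has complex dimension $\ge 2$. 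But the Courant--Fischer min-max principle guarantees that every complex $2$-plane contains a unit vector $u$ with $\langle M'u,u\rangle\ge\mu_{3}(M')=\mu\ge-1$, a contradiction. Hence $g$ has a zero $v$; the associated $h$ verifies Equation~\eqref{eq2}, so Proposition~\ref{prop:curvcond} yields a zero-curvature plane at $[A]$, and since $A\in SU(5)$ was arbitrary, every point of $\mathcal{B}_{\overline q}$ carries one.
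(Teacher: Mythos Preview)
Your proof is correct and takes a genuinely different, more conceptual route than the paper's.

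Both arguments reduce to showing that the real-valued function on $Sp(2)$ (equivalently on $S^7$) attains a nonpositive and a nonnegative value. The paper proceeds by explicit construction: to get a nonnegative value it solves the under-determined linear system $s_5\cdot h_2 = s_5\cdot h_4 = 0$ (where $s_i$ is the first four entries of the $i$-th row of $A$), and to get a nonpositive value it performs a case split on $|s_5|^2$, choosing $h_2=\overline{s_5}/|s_5|$ or $h_2=\overline{s_4}/|s_4|$ and pushing through several ad hoc inequalities. Your argument instead passes to the Hermitian form $M'=\tilde A^{*}Q\tilde A$ and exploits spectral theory: Cauchy interlacing pins down the spectrum of $M'$ as $(1,1,\mu,\nu)$ with $\mu\ge -1$ and $\nu\le -1$, a $\nu$-eigenvector immediately yields $g\le 0$, and the Courant--Fischer min-max characterization of $\mu_3$ applied to the (complex, $\ge 2$-dimensional) subspace $j(E_1)$ rules out $g<0$ everywhere. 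This avoids all case analysis and computation, and it makes transparent \emph{why} the tuple $(1,1,1,-1,-3)$ is the unique exceptional one: the repeated top eigenvalue $1$ forces $\dim E_1\ge 2$, while the gap structure forces $\mu_3\ge -1\ge \mu_4$, exactly the two inequalities your contradiction hinges on. The paper's approach has the minor virtue of being entirely elementary; yours is shorter and more illuminating.
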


\begin{proof}  From Proposition \ref{prop:curvcond}, it is sufficient to show that for every $A\in SU(5)$, there is an $h\in Sp(2)$ for which $$0 = \sum_{\ell = 1}^5 (|(Ah)_{\ell 2}|^2 + |(Ah)_{\ell 4}|^2) q_\ell.$$

Given $A\in SU(5)$, we let $g_A:Sp(2)\rightarrow \mathbb{R}$ be defined by $$g_A(h) = \sum_{\ell = 1}^5 (|(Ah)_{\ell 2}|^2 + |(Ah)_{\ell 4}|^2) q_\ell.$$  Because $Sp(2)$ is connected, in order to prove this proposition, it is sufficient to show that for each $A$, $g_A$ attains both non-positive and non-negative values.

Since $Ah\in SU(5)$ implies $\sum_{\ell= 1}^5 |(Ah)_{\ell i}|^2 = 1$ for each fixed $i$, we may rewrite $g_A$ as $$g_A(h) = 2 -  2(|(Ah)_{4 2}|^2 + |(Ah)_{4 4}|^2) - 4(|(Ah)_{5 2}|^2 + |(Ah)_{5 4}|^2)$$.

Given $A\in SU(5)$, we let $s_i = (A_{i1}, A_{i2}, A_{i3}, A_{i4})$ for $i = 4,5$.  That is, $s_4$ and $s_5$ consists of the first four entries of the fourth and fifth rows of $A$.  Given $h\in Sp(2)\subseteq SU(5)$, we let $h_2,h_4\in \mathbb{C}^4$ denote the first four entries of the $2$nd and $4$th columns of $h$, respectively.  Note that from the form of the embedding of $Sp(2)$ in $SU(5)$, $h_2$ completely determines $h_4$.  This notation allows us to express $g_A(h)$ as $$g_A(h) = 2 -  2(|(s_4\cdot h_2)|^2 + |(s_4\cdot h_4)|^2) - 4(|(s_5\cdot h_2)|^2 + |(s_5 \cdot h_4)|^2)$$ with $v\cdot w = \sum_i v_i w_i$ for $v,w\in \mathbb{C}^4$.

We begin by finding an $h$ for which $g_A(h)\geq 0$.  To that end, consider the system of equations $$\begin{cases}  s_5 \cdot h_2 = 0 \\ s_5\cdot h_4 = 0\end{cases} .$$  By breaking into real and imaginary parts, we view this as a homogeneous linear system of $4$ equations in $8$ unknowns (the real and imaginary parts of each entry of $h_2$) whose coefficients are the real and imaginary parts of the entries of $s_5$.  As $8>4$, we can always find a non-zero solution to this.  Letting $v$ denote any non-zero solution, scaled to have unit length, Proposition \ref{prop:emb} gives the existence of an $h\in Sp(2)$ with $h_2 = v$.  Thus, $s_5 \cdot h_2 = s_5\cdot h_4 = 0$.  It follows that $$g_A(h) =2 - 2(|(Ah)_{42}|^2 + |(Ah)_{44}|^2) \geq 0,$$ where the inequality follows because $Ah\in SU(5)$ implies the term in parenthesis is at most $1$.

\bigskip

We next turn attention to finding an $h$ for which $g_A(h) \leq 0$.  Assume initially that $|s_5|^2 \geq 1/2$.  Then, choosing $h$ with $h_2 = \frac{\overline{s}_5}{|s_5|}$, which is possible via Proposition \ref{prop:emb}, we find that $$g_A(h)\leq 2 - 4( |s_5\cdot h_2|^2) \leq 2- 4(1/2) \leq 0 ,$$ as desired.

Thus, we may assume $|s_5|^2 \leq \frac{1}{2}$.  Since $A\in SU(5)$, this implies $s_4\neq 0$.  Thus, we may find an $h\in Sp(2)$ with $h_2 = \frac{\overline{s}_4}{|s_4|}$.

Then $$g_A(h)\leq 2 - 2 \left|s_4\cdot \frac{\overline{s}_4}{|s_4|}\right|^2 - 4 \left| s_5\cdot \frac{\overline{s}_4}{|s_4|}\right|^2 = 2-2\frac{|s_4|^4 + 2|s_5\cdot\overline{s}_4|^2}{|s_4|^2}.$$

Note that since $A\in SU(5)$, $$|s_5\cdot \overline{s}_4|^2 = |A_{55} |^2 |\overline{A_{45}}|^2 = (1-|s_5|^2)(1-|s_4|^2).$$  Because $|s_5|^2 \leq \frac{1}{2}$, $$2|s_5\cdot \overline{s}_4|^2 = 2(1-|s_5|^2)(1-|s_4|^2) \geq 1-|s_4|^2,$$ and thus, \begin{equation}\label{eq:ineq}g_A(h)\leq 2 - 2\frac{|s_4|^4 + 1-|s_4|^2}{|s_4|^2}.\end{equation}  Since $$0\leq (|s_4|^2 - 1)^2 = |s_4|^4 -2|s_4|^2 + 1,$$ it follows that $$|s_4|^2\leq |s_4|^4 -|s_4|^2  +1,$$ and hence, that $$1\leq \frac{|s_4|^4 + 1-|s_4|^2}{|s_4|^2}.$$  Substituting this into Equation \eqref{eq:ineq}, we see that $$g_A(h) \leq 2 - 2\frac{|s_4|^4 + 1-|s_4|^2}{|s_4|^2} \leq 2-2\leq 0,$$ as desired.
\end{proof}

\subsection{Open sets with zero-curvature planes}

The main goal of this section is to prove the following theorem:

\begin{thm}\label{thm:notalmost}  Suppose $\overline{q}=(q_1,...,q_5)\in \mathbb{Z}^5$ is an admissible $5$-tuple of integers.  If the set $\{q_i+q_j\}$ contains both positive and negative integers, then $\mathcal{B}_{\overline{q}}$ is not almost positively curved.

\end{thm}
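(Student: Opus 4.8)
The plan is to produce, for each admissible $\overline{q}$ whose pairwise sums $q_i+q_j$ take both signs, an \emph{open} set $U \subseteq \mathcal{B}_{\overline{q}}$ consisting of points carrying at least one zero-curvature plane; by Proposition~\ref{prop:curvcond} it suffices to exhibit an open set of matrices $A \in SU(5)$ such that Equation~\eqref{eq2} holds for some $h \in Sp(2)$. The guiding idea is the same one that made Proposition~\ref{prop:exzero} work: after reindexing (which is harmless by Proposition~\ref{prop:isom}), arrange that $q_1+q_3$ and $q_2+q_4$ have \emph{opposite} signs, say $q_1+q_3 > 0 > q_2+q_4$, and then use the symmetry relations from Proposition~\ref{prop:emb} to rewrite the right-hand side of \eqref{eq2} for $A = I$ as a combination $(|h_{12}|^2+|h_{14}|^2)(q_1+q_3) + (|h_{22}|^2+|h_{24}|^2)(q_2+q_4)$ plus the $q_5$-term; since the two coefficients now have opposite signs, the intermediate value theorem along a path in $Sp(2)$ forces a zero. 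The point is that this persists under perturbation of $A$, since the relevant functions of $A$ are continuous and the zero is achieved ``transversally'' in an appropriate sense.

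First I would set up the analogue of the function $g_A$ from Proposition~\ref{prop:exzero}: for $A \in SU(5)$ define $g_A : Sp(2) \to \R$ by $g_A(h) = \sum_{\ell=1}^5 (|(Ah)_{\ell 2}|^2 + |(Ah)_{\ell 4}|^2) q_\ell$, and recall that $[A]$ carries a zero-curvature plane whenever $g_A$ attains the value $0$. Because $Sp(2)$ is connected and compact, it is enough to show that $g_A$ attains both a strictly positive and a strictly negative value for all $A$ in some open set. At $A=I$ the rewriting above shows $g_I$ takes a strictly positive value (pick $h$ with $h_2$ supported on the first coordinate, exploiting the transitivity part of Proposition~\ref{prop:emb}, so that the $q_1+q_3$ term dominates) and a strictly negative value (pick $h$ with $h_2$ supported on the second coordinate, making the $q_2+q_4$ term dominate); the $q_5$-term vanishes in both cases since $h_{52}=h_{54}=0$ for $h \in Sp(2) \subseteq SU(5)$. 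Both of these are \emph{strict} inequalities, so by continuity of $(A,h) \mapsto g_A(h)$ and compactness of $Sp(2)$, there is an open neighborhood $U$ of $I$ in $SU(5)$ on which $g_A$ still attains a strictly positive and a strictly negative value, hence a zero. Passing to $\mathcal{B}_{\overline{q}}$, the image of $U$ is open and every point in it carries a zero-curvature plane.

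The remaining work is bookkeeping about which reindexings are available. The hypothesis is only that \emph{some} pair sum is positive and some pair sum is negative; I need to deduce that after a permutation one can split the index set $\{1,2,3,4,5\}$ into $\{1,3\}$, $\{2,4\}$, $\{5\}$ with $q_1+q_3>0$ and $q_2+q_4<0$. Since at least three $q_i$ are positive (admissibility) and the pair sums are not all of one sign, there must be at least one negative $q_i$, so $\overline{q}$ has exactly three or four positive entries; in the four-positive case all pair sums among the positives are positive, and one checks that a negative pair sum forces the lone negative entry to have absolute value exceeding two positive ones, which gives the desired disjoint pair split, while the three-positive case is handled similarly by a short case analysis on the sizes of the two negative entries. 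I expect this combinatorial lemma — ensuring the two same-index-disjoint pairs with opposite-sign sums always exist under the stated hypothesis — to be the main obstacle, or at least the part requiring the most care, since one must rule out degenerate configurations exactly as in the proof of Lemma~\ref{lemma:dblsum}. Once the reindexing is in hand, the curvature argument is a direct continuity-and-connectedness argument modeled on Proposition~\ref{prop:exzero}, so I would not expect the geometry to pose difficulties beyond choosing the two test elements $h \in Sp(2)$ explicitly via Proposition~\ref{prop:emb}.
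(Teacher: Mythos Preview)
Your approach is correct and genuinely different from the paper's. The paper reindexes so that $q_1+q_5<0$, $q_5<0$, $q_2,q_3,q_4>0$, then works around a carefully chosen \emph{non-identity} matrix $A_0$ (a small rotation in the $(1,5)$-plane composed with a permutation); it defines an explicit open set $V\ni A_0$ by four inequalities, and for $B\in V$ shows $f_B(I)>0$ directly while $f_B(h_0)<0$ is obtained by taking $h_0$ with $h_2=\overline{s_5}/|s_5|$ and pushing through several quantitative lemmas bounding each term $(|(Bh_0)_{\ell 2}|^2+|(Bh_0)_{\ell 4}|^2)q_\ell$. Your argument instead stays at $A=I$, reindexes so that $q_1+q_3>0>q_2+q_4$, and picks the two test elements $h^\pm$ with $h_2=e_1,e_2$, giving $g_I(h^+)=q_1+q_3>0$ and $g_I(h^-)=q_2+q_4<0$ exactly; continuity then furnishes the open neighborhood. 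This is shorter and avoids all the estimates; the paper's route, on the other hand, produces an explicit description of the open set and is more self-contained in that it reuses the $h_0$-construction from Proposition~\ref{prop:exzero} rather than introducing a new pair of test elements.

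One remark: the combinatorial step you flag as ``the main obstacle'' is in fact immediate and far easier than Lemma~\ref{lemma:dblsum}. If exactly two $q_i$ are negative, their sum is the negative pair and any two of the three positives give the positive pair; if exactly one $q_i$ is negative, say $q_5$, then the hypothesized negative sum must be some $q_i+q_5$, and any two of the remaining three positives give a disjoint positive pair. No case analysis on sizes is needed. Also, compactness of $Sp(2)$ is not really used in your continuity step: it suffices that $A\mapsto g_A(h^+)$ and $A\mapsto g_A(h^-)$ are continuous and nonzero at $A=I$.
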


By permuting the $q_i$, we can and will assume that $q_1 + q_5 < 0$ and that $q_5<0$ while $q_2,q_3,q_4 > 0$. We let $q_m$ denote $\max\{q_2,q_3,q_4\}$.

Let $$A_0 = \begin{bmatrix} \cos \theta & 0 & 0 & 0 & \sin\theta\\ 0 & 1 & 0 & 0 & 0\\ 0 & 0 & 0 & 1 & 0\\ -\sin\theta &0 & 0 & 0 & \cos\theta\\ 0 & 0 & 1 & 0 & 0\end{bmatrix}\in SU(5)$$ where $0<\theta< \pi/2$ is a fixed real number small enough that $\sin\theta< \frac{1}{16\sqrt{q_m}}$.  We will find a neighborhood $A_0\in V\subseteq SU(5)$ for which every $B\in V$ contains a horizontal zero-curvature plane.  Assuming temporarily that we can accomplish this, the projection $\pi:SU(5)\rightarrow \mathcal{B}_{\overline{q}}$ maps $V$ to $\pi(V)\subseteq \mathcal{B}_{\overline{q}}$, giving an open subset witnessing the fact that $\mathcal{B}_{\overline{q}}$ is not almost positively curved.

As in the proof of Proposition \ref{prop:exzero}, given $B = (B)_{ij}\in SU(5)$, we let $s_i = s_i(B) = (B_{i1}, B_{i2}, B_{i3}, B_{i4})$ denote the first four entries in the $i$-th row of $B$.  We let $V\subseteq SU(5)$ consists of all $B\in SU(5)$ satisfying the following conditions:

\begin{enumerate} \item  $|B_{ij}| <   \frac{1}{16\sqrt{q_m}}$ for $(i,j)\not\in \{(1,1), (2,2), (3,4), (4,5), (5,3)\}$

\item $|s_5|^2 > 7/8$

\item  $|s_1| < |s_5|$

\item  $\sum_{\ell = 1}^5 \left( |B_{\ell 2}|^2 + |B_{\ell 4}|^2 \right)q_\ell > 0$ \end{enumerate}

\begin{proposition} The matrix $A_0\in V$, so $V\neq \emptyset$
\end{proposition}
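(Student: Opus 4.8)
The plan is to verify directly that $A_0$ satisfies each of the four defining conditions of $V$. This is a routine but bookkeeping-heavy check, so I would organize it condition by condition. The key observation throughout is that $A_0$ is almost a permutation matrix: its only nonzero entries are $(A_0)_{11} = (A_0)_{44} = \cos\theta$, $(A_0)_{15} = (A_0)_{41} = \pm\sin\theta$ (with the sign as written), $(A_0)_{22} = (A_0)_{34} = (A_0)_{53} = 1$, and all other entries vanish. Here I am relabeling: writing $B = A_0$, the nonzero entries off the ``permutation skeleton'' $\{(1,1),(2,2),(3,4),(4,5),(5,3)\}$ are exactly the two entries $(1,5)$ and $(4,1)$, each of modulus $\sin\theta$.

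For condition (1), I would note that every entry $(A_0)_{ij}$ with $(i,j)\notin\{(1,1),(2,2),(3,4),(4,5),(5,3)\}$ is either $0$ or equal to $\pm\sin\theta$, and by hypothesis $\sin\theta < \tfrac{1}{16\sqrt{q_m}}$, so the strict inequality holds. For condition (2), $s_5 = s_5(A_0) = ((A_0)_{51},(A_0)_{52},(A_0)_{53},(A_0)_{54}) = (0,0,1,0)$, so $|s_5|^2 = 1 > 7/8$. For condition (3), $s_1 = ((A_0)_{11},(A_0)_{12},(A_0)_{13},(A_0)_{14}) = (\cos\theta,0,0,0)$, so $|s_1| = \cos\theta < 1 = |s_5|$. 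For condition (4), the relevant columns are the second and fourth columns of $A_0$: the second column is $e_2$ and the fourth column is $e_3$ (the third standard basis vector), so $\sum_\ell (|(A_0)_{\ell 2}|^2 + |(A_0)_{\ell 4}|^2) q_\ell = q_2 + q_3 > 0$ since $q_2, q_3 > 0$ by our standing assumption. I should also remark that $A_0 \in SU(5)$ genuinely holds — the columns are orthonormal and the determinant computation (expanding along the sparse structure) gives $+1$ — though this is asserted in the display defining $A_0$ and can be taken as given.

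There is essentially no serious obstacle here; the proposition is a sanity check that the open set $V$ is nonempty, and the only mild subtlety is keeping the index conventions straight (which matrix entries are ``$h_2, h_4$'' versus rows $s_i$, and that ``second and fourth columns'' is what enters condition (4)). I would present the argument compactly as a single paragraph walking through (1)–(4). Concretely:

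\begin{proof}
Write $B = A_0$ and recall the nonzero entries of $A_0$ are $(A_0)_{11} = (A_0)_{44} = \cos\theta$, $(A_0)_{15} = -(A_0)_{41} = \sin\theta$, and $(A_0)_{22} = (A_0)_{34} = (A_0)_{53} = 1$. For condition (1): any entry $(A_0)_{ij}$ with $(i,j) \notin \{(1,1),(2,2),(3,4),(4,5),(5,3)\}$ equals $0$ or $\pm\sin\theta$, and by the choice of $\theta$ we have $\sin\theta < \tfrac{1}{16\sqrt{q_m}}$, so $|(A_0)_{ij}| < \tfrac{1}{16\sqrt{q_m}}$. For condition (2): $s_5(A_0) = (0,0,1,0)$, so $|s_5|^2 = 1 > 7/8$. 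For condition (3): $s_1(A_0) = (\cos\theta, 0,0,0)$, so $|s_1| = \cos\theta < 1 = |s_5|$. For condition (4): the second column of $A_0$ is the standard basis vector $e_2$ and the fourth column is $e_3$, so
\begin{equation*}
\sum_{\ell=1}^5 \left( |(A_0)_{\ell 2}|^2 + |(A_0)_{\ell 4}|^2 \right) q_\ell = q_2 + q_3 > 0,
\end{equation*}
since $q_2, q_3 > 0$. Hence $A_0 \in V$, and in particular $V \neq \emptyset$.
\end{proof}
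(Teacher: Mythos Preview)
Your proof is correct and follows exactly the same condition-by-condition verification as the paper. In fact, for condition (4) your computation gives $q_2 + q_3$, while the paper records $q_2 + q_4$; reading off the fourth column of $A_0$ shows that your value is the correct one, though since $q_2, q_3, q_4 > 0$ either way the required positivity holds.
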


\begin{proof}We verify the four conditions in order.  If $(i,j)\neq (1,5), (4,1)$, then the first condition becomes $|0| < \frac{1}{16\sqrt{q_m}}$, which is obviously true.  In the two exceptional cases, $|A_{ij}| = |\sin\theta| < \frac{1}{16\sqrt{q_m}}$ by the choice of $\theta$.

The second condition becomes $1 > 7/8$, which is obviously true.  The third condition becomes $|\cos\theta| < 1$, which is true since $0<\theta <\pi/2$.

The last condition becomes $q_2 + q_4 > 0$, which is true since $q_2$ and $q_4$ are both larger than zero by assumption.

\end{proof}

Now, given $B\in V$ and viewing $Sp(2)\subseteq SU(5)$, we define a function $f_B:Sp(2)\rightarrow \mathbb{R}$ by $$f_B(h) = \sum_{\ell = 1}^5 \left( |(Bh)_{\ell 2}|^2 + |(Bh)_{\ell 4}|^2 \right)q_\ell.$$  From Proposition \ref{prop:curvcond}, if $f_B$ has a zero for every $B\in V$, we will have established that every point in $V$ has at least one zero-curvature plane.  As in the proof of Proposition \ref{prop:exzero}, it is sufficient to show that $f_B$ achieves both positive and negative values.  The last condition defining $V$ asserts that $f_B(I) > 0$, where $I\in Sp(2)\subseteq SU(5)$ is the identity matrix.  Thus, we need only find $h\in Sp(2)$ with $f_B(h) < 0$.   As before, we let $h_2,h_4\in \mathbb{C}^4$ denote the first four entries of the $2$nd and $4$th columns of $h$.

To that end, we let $h_0\in Sp(2)\subseteq SU(5)$ denote any element with $$h_2 = \frac{\overline{s}_5}{|s_5|};$$ such an $h$ exists due to Proposition \ref{prop:emb}.

We claim that $f(h_0) < 0$.  We will prove this via a series of lemmas which estimate each of the terms $(|(Bh_0)_{\ell 2}|^2 + |(Bh_0)_{\ell 4}|^2)q_\ell$ of $f_B$.  The idea behind the choice of $h_0$ is to make the $\ell = 5$ term dominate this sum.  Since $q_5$ is negative by assumption, this gives $f(h_0)< 0$ as desired.

We begin with a bound on $|(Bh)_{\ell 2}|^2 + |(Bh)_{\ell 4}|^2$, which is valid for any $h$.

\begin{lem}\label{lem:q1bound} For any $h\in Sp(2)\subseteq SU(5)$, $$|(Bh)_{\ell 2}|^2 + |(Bh)_{\ell 4}|^2 \leq |s_{\ell}|^2.$$

\end{lem}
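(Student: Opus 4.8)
The plan is to reduce the inequality to Bessel's inequality in $\mathbb{C}^4$. First I would unwind the left-hand side. Writing $h$ in its embedded form, the last row and last column of $h$ are $e_5$; in particular $h_{52} = h_{54} = 0$. Hence for $j \in \{2,4\}$ we have
\[ (Bh)_{\ell j} = \sum_{k=1}^{5} B_{\ell k} h_{kj} = \sum_{k=1}^{4} B_{\ell k} h_{kj} = s_\ell \cdot h_j, \]
using the notation $v\cdot w = \sum_i v_i w_i$ from the proof of Proposition~\ref{prop:exzero} and the vectors $h_2, h_4 \in \mathbb{C}^4$ introduced above. Thus the claim becomes $|s_\ell \cdot h_2|^2 + |s_\ell \cdot h_4|^2 \leq |s_\ell|^2$.

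Next I would observe that $s_\ell \cdot h_j = \langle s_\ell, \overline{h_j}\rangle$, where $\langle v, w\rangle = \sum_i v_i \overline{w_i}$ is the standard Hermitian inner product on $\mathbb{C}^4$ and $\overline{h_j}$ denotes the entrywise conjugate. Since $h$ lies in the copy of $Sp(2)$ embedded as the upper-left $4\times 4$ block (with a $1$ in the $(5,5)$ entry and zeros elsewhere in the last row and column), that block is unitary, so its columns—which are exactly $h_1,h_2,h_3,h_4\in\mathbb{C}^4$—are orthonormal. Conjugation preserves norms and the moduli of Hermitian inner products, so $\{\overline{h_2}, \overline{h_4}\}$ is an orthonormal pair in $\mathbb{C}^4$. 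Applying Bessel's inequality to the vector $s_\ell$ and this orthonormal pair yields
\[ |s_\ell \cdot h_2|^2 + |s_\ell \cdot h_4|^2 = |\langle s_\ell, \overline{h_2}\rangle|^2 + |\langle s_\ell, \overline{h_4}\rangle|^2 \leq |s_\ell|^2, \]
which is exactly the asserted bound.

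There is essentially no real obstacle here; the only point requiring care is the bookkeeping of which rows and columns of the embedded $h$ vanish, so that the fifth coordinate genuinely drops out and the estimate reduces to a computation in $\mathbb{C}^4$. Equivalently, one could phrase the final step as the observation that the linear map $\mathbb{C}^4 \to \mathbb{C}^2$, $v \mapsto (v\cdot h_2, v\cdot h_4)$, is norm-nonincreasing because $\{\overline{h_2},\overline{h_4}\}$ is orthonormal; Bessel's inequality is just the compact formulation of this fact.
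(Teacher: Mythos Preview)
Your proof is correct and takes essentially the same approach as the paper: both reduce to the orthonormality of $h_2$ and $h_4$ (equivalently $\overline{h_2}$ and $\overline{h_4}$) in $\mathbb{C}^4$ and then apply Bessel's inequality. The only cosmetic difference is that the paper normalizes $|s_\ell|=1$ and carries out Bessel's inequality by an explicit $SU(4)$ change of coordinates sending $h_2,h_4$ to $e_1,e_2$, whereas you invoke Bessel by name.
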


\begin{proof} As mentioned before, $(Bh)_{\ell 2} = s_{\ell}\cdot h_2$, and likewise for $(Bh)_{\ell 4}$.  The inequality we are trying to prove is invariant under scalings of the vector $s_{\ell}$, hence we may assume that $|s_{\ell}| = 1$.  Thus, we must show that $$|s_{\ell}\cdot h_2|^2 + |s_{\ell}\cdot h_4|^2\leq 1.$$

Because of the form of $h\in Sp(2)\subseteq SU(5)$, we see that the dot product $h_2\cdot \overline{h}_4 = 0$, so these vectors are orthonormal.  It follows that there is an element in $SU(4)$ which transforms $h_2$ to $(1,0,0,0)$, $h_4$ to $(0,1,0,0)$, and $s_{\ell} = (B_{\ell 1}, B_{\ell 2}, B_{\ell 3}, B_{\ell 4})$ to a unit vector of the form $(a,b,c,0)$

Then $$|(Bh)_{\ell 2}|^2 + |(Bh)_{\ell 4}|^2 = |s_{\ell}\cdot h_2|^2 + |s_{\ell}\cdot h_4|^2 = a^2 + b^2 = 1-c^2,$$ so is maximized when $c=0$, where the maximum value is $1$.

\end{proof}


We now find bounds for the lengths of the entries $\frac{\overline{B_{5j}}}{|s_5|}$ of $h_2$, when $j \in \{1,2,4\}$.

\begin{lem}\label{lem:ineq}  For any $B\in V$ and any $j\in \{1,2,4\}$,  $$\frac{|B_{5j}|}{|s_5|} \leq \frac{1}{14 \sqrt{q_m}}.$$

\end{lem}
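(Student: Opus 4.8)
The quantity $|B_{5j}|$ for $j \in \{1,2,4\}$ is, by definition, $|B_{5j}|$ where $(5,j)$ is one of the off-diagonal positions excluded in condition (1) defining $V$ (since the distinguished positions in row $5$ and the relevant columns are $(5,3)$, not $(5,1)$, $(5,2)$, or $(5,4)$). Hence condition (1) directly gives $|B_{5j}| < \frac{1}{16\sqrt{q_m}}$. So the numerator is already controlled; the only work is to bound the denominator $|s_5|$ from below by something like $\frac{14}{16} = \frac{7}{8}$, after which $\frac{|B_{5j}|}{|s_5|} < \frac{1}{16\sqrt{q_m}} \cdot \frac{1}{|s_5|}$, and we need $\frac{1}{16|s_5|} \le \frac{1}{14}$, i.e.\ $|s_5| \ge \frac{14}{16} = \frac{7}{8}$.

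The relevant hypothesis is condition (2), which says $|s_5|^2 > 7/8$. The plan is simply to observe that $|s_5|^2 > 7/8$ implies $|s_5| > \sqrt{7/8} > 7/8$ (since $\sqrt{7/8} \approx 0.935 > 0.875$; more rigorously, $(7/8)^2 = 49/64 < 56/64 = 7/8$, so $7/8 < \sqrt{7/8}$). Therefore $|s_5| > 7/8 = 14/16$, and
\[
\frac{|B_{5j}|}{|s_5|} < \frac{1/(16\sqrt{q_m})}{14/16} = \frac{1}{14\sqrt{q_m}},
\]
which is the desired inequality (with strict inequality, hence certainly $\le$).

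There is really no obstacle here — this is a routine two-line estimate chaining conditions (1) and (2) in the definition of $V$. The only thing to be careful about is confirming that $(5,1)$, $(5,2)$, and $(5,4)$ are indeed among the positions $(i,j)\notin\{(1,1),(2,2),(3,4),(4,5),(5,3)\}$ so that condition (1) applies to them, and getting the numerical comparison $\sqrt{7/8} > 7/8$ stated correctly. I would write it as: by condition (1), $|B_{5j}| < \frac{1}{16\sqrt{q_m}}$ for $j \in \{1,2,4\}$; by condition (2) and the fact that $x > x^2$ for $0 < x < 1$ applied to $x = 7/8$, we have $|s_5| > \sqrt{7/8} > 7/8 = 14/16$; dividing gives the claim.
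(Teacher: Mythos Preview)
Your proof is correct and follows essentially the same approach as the paper: bound the numerator by condition (1) and the denominator by condition (2), then divide. You are in fact slightly more careful than the paper, which passes directly from $|s_5|^2 > 7/8$ to $|s_5| > 7/8$ without remarking that $\sqrt{7/8} > 7/8$; your justification via $(7/8)^2 = 49/64 < 7/8$ makes that step explicit.
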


\begin{proof}  Since $B\in V$, $s_5=|(B_{51}, B_{52}, B_{53}, B_{54})| > 7/8$ and $|B_{5j}| < \frac{1}{16 \sqrt{q_m}}$ for $j\in \{1,2,4\}$, we see $$\frac{|B_{5j}|}{|s_5|}  < \frac{1/(16\sqrt{q_m})}{7/8} = \frac{1}{14\sqrt{q}_m}.$$

\end{proof}

We can now bound the terms $(|(Bh_0)_{\ell 2}|^2 + |(Bh_0)_{\ell 4}|^2)q_\ell$ of $f_B$ when $\ell \in \{2,3,4\}$.

\begin{lem}\label{lem:234}

For $h_0$ as above and $\ell\in \{2,3,4\}$, we have $$(|(Bh_0)_{\ell 2}|^2 + |(Bh_0)_{\ell 4}|^2)q_\ell \leq 2/9.$$

\end{lem}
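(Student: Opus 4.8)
The plan is to exploit the two bounds just established. For $\ell \in \{2,3,4\}$ the entry $B_{\ell 5}$ is one of the "small" entries (since $(\ell,5) \notin \{(1,1),(2,2),(3,4),(4,5),(5,3)\}$), so $|s_\ell|^2 = 1 - |B_{\ell 5}|^2$ is close to $1$; that alone is too weak. Instead I would combine Lemma~\ref{lem:q1bound}, which gives $|(Bh_0)_{\ell 2}|^2 + |(Bh_0)_{\ell 4}|^2 \le |s_\ell|^2 \le 1$, with a direct computation of $(Bh_0)_{\ell 2} = s_\ell \cdot h_2 = s_\ell \cdot \overline{s}_5/|s_5|$ and the analogous expression for $(Bh_0)_{\ell 4}$. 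The key observation is that $s_\ell \cdot \overline{s}_5$ differs from a genuine row-inner-product in $SU(5)$ only by the missing fifth-coordinate term: since the rows of $B$ are orthonormal, $\sum_{k=1}^5 B_{\ell k}\overline{B_{5k}} = 0$, hence $s_\ell \cdot \overline{s}_5 = -B_{\ell 5}\overline{B_{55}}$, which is small because $|B_{\ell 5}| < \tfrac{1}{16\sqrt{q_m}}$ for these $\ell$. So $(Bh_0)_{\ell 2}$ is bounded by a small multiple of $1/\sqrt{q_m}$.

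Concretely, the steps are: (1) write $(Bh_0)_{\ell 2} = s_\ell\cdot \overline{s}_5/|s_5| = -B_{\ell 5}\overline{B_{55}}/|s_5|$ and estimate its modulus using $|B_{\ell 5}| < \tfrac{1}{16\sqrt{q_m}}$, $|B_{55}| \le 1$, and $|s_5| > \sqrt{7/8}$, as in Lemma~\ref{lem:ineq}; (2) similarly bound $(Bh_0)_{\ell 4} = s_\ell \cdot h_4$, using that $h_4$ is (up to conjugation and sign swaps) a permutation of the entries of $h_2 = \overline{s}_5/|s_5|$, so its entries are again the quantities controlled by Lemma~\ref{lem:ineq}, giving $|(Bh_0)_{\ell 4}| \le 4\cdot\big(\text{max entry of }h_4\big)\cdot\big(\text{max entry of }s_\ell\big)$ or, more cleanly, re-expressing $s_\ell\cdot h_4$ again as a truncated orthonormality relation if the structure permits; (3) add the two squared bounds and multiply by $q_\ell \le q_m$, so that the $q_m$ in the denominator of the squared estimate cancels the $q_m$ from $q_\ell$, leaving a pure numerical constant; (4) check that constant is $\le 2/9$, which should hold with the factors $16$ and $14$ chosen precisely so that the arithmetic works out with room to spare.

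The main obstacle I anticipate is step~(2): bounding $|(Bh_0)_{\ell 4}|$ cleanly. Unlike $(Bh_0)_{\ell 2}$, this does not collapse to a single truncated orthonormality relation, because $h_4$ is built from $h_2$ by the quaternionic structure map $(w_1,w_2,w_3,w_4)\mapsto(-\overline{w_2},\overline{w_1},-\overline{w_4},\overline{w_3})$ (up to block conventions), so $s_\ell\cdot h_4$ is a sum of products $B_{\ell k}\,\overline{B_{5 k'}}$ with shuffled indices, not a recognizable inner product. The safe route is the crude one: each of the four entries of $h_4$ has modulus $|B_{5k}|/|s_5| \le \tfrac{1}{14\sqrt{q_m}}$ by Lemma~\ref{lem:ineq} applied with the appropriate index (noting $(5,3)$ is the only large entry of row $5$, but it gets moved to a position that pairs against a small entry of $s_\ell$, or one simply uses $|s_\ell|\le 1$ to absorb it), so $|(Bh_0)_{\ell 4}| \le |s_\ell|\cdot \|h_4\|_\infty\cdot 2 \le \tfrac{2}{14\sqrt{q_m}}$ by Cauchy–Schwarz with the support structure; squaring, multiplying by $q_m$, and adding the analogous term for $(Bh_0)_{\ell 2}$ yields a bound well below $2/9$. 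I would double-check the index bookkeeping for where the large entry $B_{53}$ lands in $h_4$, since that is the one place the estimate could silently fail.
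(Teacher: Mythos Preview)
Your step~(1) is correct and is in fact cleaner than what the paper does: using row orthonormality of $B$ to collapse $s_\ell\cdot\overline{s}_5$ to $-B_{\ell 5}\overline{B_{55}}$ immediately gives $|(Bh_0)_{\ell 2}|\le |B_{\ell 5}|/|s_5|<\tfrac{1}{16\sqrt{q_m}}\cdot\sqrt{8/7}$, which is a sharper bound than the paper obtains. The paper instead applies the triangle inequality to $|s_\ell\cdot h_2|$ term by term, bounding three of the four products by $\tfrac{1}{16\sqrt{q_m}}$ (the $B_{\ell k}$ factor is small and $|B_{5k}|/|s_5|\le 1$) and the remaining one by $\tfrac{1}{14\sqrt{q_m}}$ via Lemma~\ref{lem:ineq}, arriving at $|(Bh_0)_{\ell 2}|<\tfrac{1}{3\sqrt{q_m}}$.

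Your step~(2), however, is where the proposal goes off the rails. The assertion that ``each of the four entries of $h_4$ has modulus $\le\tfrac{1}{14\sqrt{q_m}}$'' is false: under the quaternionic structure map $(w_1,w_2,w_3,w_4)\mapsto(-\overline{w_3},-\overline{w_4},\overline{w_1},\overline{w_2})$, the entry $\overline{B_{53}}/|s_5|$ of $h_2$ becomes the \emph{first} entry of $h_4$, and this is large. Consequently the Cauchy--Schwarz/\,$\|h_4\|_\infty$ bound you sketch does not give anything useful. The fallback you mention in the last sentence---checking that $B_{53}$ lands against a small entry of $s_\ell$---is exactly the right move, and carrying it out \emph{is} the paper's argument for this term: one writes $|(Bh_0)_{\ell 4}|\le\sum_{k}|B_{\ell k}|\,|(h_4)_k|$ and observes that in every summand at least one factor is small (either the $h_4$ entry comes from a small $B_{5j}$, $j\in\{1,2,4\}$, or the paired $B_{\ell k}$ is small because $(\ell,k)$ is not exceptional), yielding again a bound below $\tfrac{1}{3\sqrt{q_m}}$. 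Squaring, summing, and multiplying by $q_\ell\le q_m$ gives $\le 2/9$.

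So your orthonormality trick genuinely shortens the $(Bh_0)_{\ell 2}$ estimate, but for $(Bh_0)_{\ell 4}$ there is no analogous collapse, and you should revert to the straightforward term-by-term triangle inequality that the paper uses for both.
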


\begin{proof}

First observe that the triangle inequality gives \begin{align*}|(Bh_0)_{\ell 2}| &= |s_{\ell}\cdot h_2|\\ &\leq |B_{\ell 1}|\frac{| B_{51}|}{|s_5|} + |B_{\ell 2}|\frac{| B_{52}|}{|s_5|} + |B_{\ell 3}|\frac{| B_{53}|}{|s_5|} +|B_{\ell 4}|\frac{| B_{54}|}{|s_5|}.\end{align*}

Since $B\in V$, the terms involving $B_{ij}$ for $(i,j)\neq (2,2), (3,4)$ are bounded by $\frac{1}{16\sqrt{q_m}}$.  From Lemma \ref{lem:ineq}, $|B_{22}|\frac{|B_{52}|}{|s_5|}$ and $|B_{34}|\frac{|B_{54}|}{|s_5|}$ are both less than $\frac{1}{14\sqrt{q_m}}$.  Thus, for fixed $\ell\in \{2,3,4\}$, $|(Bh_0)_{\ell 2}| \leq \frac{3}{16 \sqrt{q_m}} + \frac{1}{14\sqrt{q_m}} < \frac{1}{3\sqrt{q_m}}$.

A similar argument applies to $|(Bh_0)_{\ell 4}|$.  Thus, $$(|(Bh)_{\ell 2}|^2 + |(Bh)_{\ell 4}|^2)q_\ell \leq \left(\frac{1}{9 q_m} + \frac{1}{9q_m}\right)q_\ell = \frac{2}{9} \frac{q_\ell}{q_m} \leq \frac{2}{9}.$$

\end{proof}

We are now ready to complete the proof that $f_B(h_0) < 0$.

\begin{proposition}  For $h_0$ as above, $f_B(h_0) < 0$.
\end{proposition}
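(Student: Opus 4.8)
The plan is to split the sum $f_B(h_0) = \sum_{\ell=1}^5 (|(Bh_0)_{\ell 2}|^2 + |(Bh_0)_{\ell 4}|^2) q_\ell$ into the terms $\ell \in \{2,3,4\}$, the term $\ell = 1$, and the term $\ell = 5$, and to show that the negative contribution of the $\ell = 5$ term dominates. From Lemma~\ref{lem:234}, each of the three terms with $\ell \in \{2,3,4\}$ contributes at most $2/9$, so together they contribute at most $6/9 = 2/3$. For the $\ell = 1$ term, since $q_1 + q_5 < 0$ we will write $q_1 = -q_5 - r$ for some $r > 0$ (both $q_1+q_5$ and $q_5$ negative), or more simply bound $|(Bh_0)_{12}|^2 + |(Bh_0)_{14}|^2 \le |s_1|^2 < |s_5|^2$ using Lemma~\ref{lem:q1bound} together with condition (3) defining $V$. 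The point of condition (3) is precisely that the (possibly large and positive) coefficient $q_1$ gets multiplied by something strictly smaller than what multiplies $q_5$, so the $\ell=1$ and $\ell=5$ terms can be combined to a strictly negative quantity.

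Concretely, I would first compute the $\ell = 5$ term exactly: since $h_2 = \overline{s}_5/|s_5|$, we have $(Bh_0)_{52} = s_5 \cdot h_2 = s_5 \cdot \overline{s}_5/|s_5| = |s_5|$, so $|(Bh_0)_{52}|^2 = |s_5|^2 > 7/8$ by condition (2); adding the nonnegative $\ell=5$, fourth-column term only helps, so $(|(Bh_0)_{52}|^2 + |(Bh_0)_{54}|^2) q_5 \le |s_5|^2 q_5 \le (7/8) q_5 \le -7/8$ using $q_5 \le -1$. Then the $\ell = 1$ term is at most $|s_1|^2 q_1$; combining with the $\ell = 5$ term and using $|s_1|^2 < |s_5|^2$ and $q_1 + q_5 < 0$, one gets $|s_1|^2 q_1 + |s_5|^2 q_5 < |s_5|^2 q_1 + |s_5|^2 q_5 = |s_5|^2(q_1+q_5) < 0$. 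Actually it is cleaner to note $|s_1|^2 q_1 + |s_5|^2 q_5 \le |s_5|^2 q_1 + |s_5|^2 q_5$ when $q_1 \ge 0$ (if $q_1 < 0$ the term is already negative and even easier), hence $\le |s_5|^2(q_1 + q_5) \le -7/8$ as well, since $q_1 + q_5 \le -2$ is even (being a sum of two odd integers) and $|s_5|^2 > 7/8$. So the $\ell \in \{1,5\}$ part is at most $-7/8$.

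Putting the pieces together, $f_B(h_0) \le 2/3 + (-7/8) < 0$, which is the desired conclusion; then $f_B$ achieves both a positive value (at $I$, by condition (4)) and a negative value (at $h_0$), so by connectedness of $Sp(2)$ it has a zero, and Proposition~\ref{prop:curvcond} gives a zero-curvature plane at $[B]$ for every $B \in V$. The only mild subtlety — and the step I would be most careful about — is the case analysis on the sign of $q_1$: if $q_1 \ge 0$ the monotonicity argument $|s_1|^2 q_1 \le |s_5|^2 q_1$ (using $|s_1| < |s_5|$) is what is needed, whereas if $q_1 < 0$ the $\ell=1$ term is nonpositive outright and can simply be discarded. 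In neither case do I need the full strength of condition (1); conditions (1)--(3) are exactly calibrated so that the three small terms ($\ell \in \{2,3,4\}$) total less than $7/8$ while the $\ell=5$ term alone exceeds $7/8$ in magnitude. I expect no real obstacle here — the argument is a bookkeeping exercise once the right grouping of terms is chosen.
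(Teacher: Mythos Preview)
Your proposal is correct and follows essentially the same approach as the paper: bound the $\ell\in\{2,3,4\}$ contribution by $2/3$ via Lemma~\ref{lem:234}, handle the $\ell=1$ term with Lemma~\ref{lem:q1bound} and condition~(3), and show the $\ell=5$ term dominates. The only cosmetic differences are that the paper verifies $(Bh_0)_{54}=0$ exactly (whereas you just note the extra nonnegative term helps), and the paper records the sharper bound $|s_5|^2(q_1+q_5)<-7/4$ in the $q_1>0$ case rather than your $-7/8$; either suffices since $2/3-7/8<0$.
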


\begin{proof} From Lemma \ref{lem:234}, the total contribution from the $q_2,q_3$ and $q_4$ terms of $f_B$ is bounded above by $3\cdot \frac{2}{9} = 2/3$.  Thus, if we can show that the $q_1$ and $q_5$ terms contribute less than $-2/3$, we will have shown $f_B(h_0) < 0$.

By the definition of $h_0$, $ |(Bh_0)_{52}|^2 = |s_5|^2$ and it is easy to verify that $(Bh_0)_{54} = 0$.  Since $B\in V$ and $q_5\leq -1$, we find $$\left( |(Bh_0)_{52}|^2 + |(Bh_0)_{54}|^2\right) q_5 < -7/8.$$  Of course, if $q_1<0$, it now follows easily that the contribution of the $q_1$ and $q_5$ terms is at most $-7/8 < -2/3$, completing the proof in this case.

Thus, we may assume $q_1 > 0$.  Then Lemma \ref{lem:q1bound} yields a bound of $|s_1|^2 q_1$ for the $q_1$ term of $f_B$.

We thus compute \begin{align*} \sum_{\ell \in \{1,5\}} \left( |(Bh)_{\ell 2}|^2 + |(Bh)_{\ell 4}|^2\right) q_\ell &\leq |s_1|^2 q_1 + |s_5|^2 q_5\\ &< |s_5|^2(q_1+ q_5)\\ &\leq -2|s_5|^2\\ & < -7/4. \end{align*}  In the above displayed inequalities, the second inequality uses the fact that $B\in V$ implies $|s_1| < |s_5|$, the third inequality is using the fact that $q_5 + q_1 < 0$ and that both are odd integers, and the last inequality comes from the fact that $B\in V$.

Since $-7/4 < -2/3$, it follows that $f(h_0) < 0$, as claimed.

\end{proof}

We may now complete the proof of Theorem \ref{thm:main}

\begin{proof}(Proof of Theorem \ref{thm:main}) Suppose $\overline{q} = (q_1,..., q_5)$ is admissible.  For the first part of Theorem \ref{thm:main}, note that, from Theorem \ref{thm:qp}, $\mathcal{B}_{\overline{q}}$ is quasi-positively curved, except when $\overline{q}$ is a permutation of $ (1,1,1,-1,-3)$.  In this exceptional case, Proposition \ref{prop:exzero} shows it has zero-curvature planes at every point.

We now prove the second part of Theorem \ref{thm:main}.  As Kerin \cite{Ke1} has shown $\mathcal{B}_{\overline{q}}$ with $\overline{q} = (1,1,1,1,-1)$ is almost positively curved, the backwards implication is clear.  Now, suppose $\mathcal{B}_{\overline{q}}$ is almost positively curved.  If some $q_i + q_j < 0$ for distinct $i,j,$ then Theorem \ref{thm:notalmost} shows that $\mathcal{B}_{\overline{q}}$ is not almost positively curved.  Thus, we must have $q_i + q_j \geq 0$ for all distinct $i,j$.

If all $q_i + q_j >0$, then it is well known $\mathcal{B}_{\overline{q}}$ has positive curvature \cite{EKS,Baz1}, so we may assume $q_i + q_j = 0$ for some distinct $i, j$.

Then, as shown by \cite[Lemma 3.4]{Ke1}, up to isometry, we must be in one of two cases: $(q_1,...,q_5) = (1,1,1,n,-n)$ or $(1,1,-3,n,-n)$ with $n\geq 1$ odd.  However, of these, only the first with $n=1$ has all $q_i + q_j \geq 0$.

\end{proof}

\bibliographystyle{plain}
\bibliography{bibliography.bib}

\end{document}